\newtheorem{thm}{Theorem}[section]
\newtheorem{lem}[thm]{Lemma}
\newtheorem{prop}[thm]{Proposition}
\newtheorem{rem}[thm]{Remark}
\theoremstyle{definition}
\newcommand{\R}{\Bbb R}
\newcommand{\e}{\varepsilon }
\newcommand{\RN}{\R^N}
\def\red#1{\textcolor{red}{#1}}
\numberwithin{equation}{section}
\begin{document}
\title{\bf Existence and non-existence of maximizers\\for the
Moser-Trudinger type inequalities under inhomogeneous constraints}

\author{Norihisa Ikoma\,$^1$, Michinori Ishiwata\,$^2$ and Hidemitsu Wadade\,$^3$}
\date{\small\it 
$^1$Department of Mathematics,
Faculty of Science and Technology, 
Keio University, Yokohama, Kanagawa 2238522, Japan
\vspace{.3cm}\\
$^2$Department of Systems Innovation, 
Graduate School of Engineering Science, Osaka University,\\
Toyonaka, Osaka 5608531, Japan
\vspace{.3cm}\\
$^3$Faculty of Mechanical Engineering, Institute of Science and Engineering, Kanazawa University,\\
Kanazawa, Ishikawa 9201192, Japan}

\maketitle

\begin{abstract}
In this paper, we study the existence and non-existence of maximizers 
for the Moser-Trudinger type inequalities in $\Bbb R^N$ of the form 
	\[
		D_{N,\alpha}(a,b):=
		\sup_{u\in W^{1,N}(\Bbb R^N),\,\|\nabla u\|_{L^N(\Bbb R^N)}^a+\|u\|_{L^N(\Bbb R^N)}^b=1}
		\int_{\Bbb R^N}\Phi_N\left(\alpha|u|^{N'}\right)dx.
	\]
Here $N\geq 2$, $N'=\frac{N}{N-1}$, $a,b>0$, $\alpha \in (0,\alpha_N]$ 
and $\Phi_N(t):=e^t-\sum_{j=0}^{N-2}\frac{t^j}{j!}$ 
where $\alpha_N:= N \omega_{N-1}^{1/(N-1)}$ and $\omega_{N-1}$ denotes 
the surface area of the unit ball in $\RN$. 
We show the existence of the threshold $\alpha_\ast = \alpha_\ast(a,b,N) \in [0,\alpha_N]$ 
such that 
$D_{N,\alpha}(a,b)$ is not attained if $\alpha \in (0,\alpha_\ast)$ 
and is attained if $ \alpha \in (\alpha_\ast , \alpha_N)$. 
We also provide the conditions on $(a,b)$ in order that the inequality 
$\alpha_\ast < \alpha_N$ holds.

\bigskip

\noindent
2010 Mathematics Subject Classification. 
47J30;\,46E35;\,26D10.

\noindent 
{\it Key words\,:} Moser-Trudinger inequality, maximizing problem, Sobolev's embedding theorem, vanishing phenomenon, concentration
\end{abstract}

\section{Introduction and main results}
\noindent

The classical Moser-Trudinger inequality originates from the embedding $W^{1,N}_0(\Omega)\hookrightarrow L_{\psi_{N,\alpha}}(\Omega)$ 
for any bounded domain $\Omega\subset\Bbb R^N$ with some small $\alpha>0$ independently proved in \cite{Po, T, Y}. 
Here $N\geq 2$ and $L_{\psi_{N,\alpha}}(\Omega)$ denotes the Orlicz space associated 
with the Young function $\psi_{N,\alpha}(t):=\exp(\alpha|t|^{N'})-1$ where 
$N':=\frac{N}{N-1}$. In \cite{M}, this embedding was sharpened by proving  
\begin{align}\label{classical-MT}
&B_{N,\alpha}(\Omega):=\sup_{u\in W^{1,N}_0(\Omega),\,\|\nabla u\|_{L^N(\Omega)}=1}\int_\Omega\exp\left(\alpha |u(x)|^{N'}\right)dx\\
&\notag\begin{cases}
&<+\infty\quad\text{if \,}0<\alpha\leq \alpha_N:=N\omega_{N-1}^{\frac{1}{N-1}},\\
&=+\infty\quad\text{if \,}\alpha>\alpha_N 
\end{cases}
\end{align}
for any bounded domain $\Omega$, where $\omega_{N-1}$ denotes the surface area of the unit ball in $\Bbb R^N$. 

On the other hand, when volume of $\Omega$ is infinite, 
there are several extensions of the Moser-Trudinger inequality \eqref{classical-MT}. 
Firstly, we mention a scaling invariant version established in \cite{AT, Og} as follows:
\begin{align}\label{scaling-invari-MT}
&C_{N,\alpha}:=\sup_{u\in W^{1,N}(\Bbb R^N),\,\|\nabla u\|_{L^N(\Bbb R^N)}=1}\frac{1}{\|u\|_{L^N(\Bbb R^N)}^N}\int_{\Bbb R^N}\Phi_N\left(\alpha|u(x)|^{N'}\right)dx\\
&\notag\begin{cases}
&<+\infty\quad\text{if \,}0<\alpha<\alpha_N,\\
&=+\infty\quad\text{if \,}\alpha\geq\alpha_N,
\end{cases}
\end{align}
where $\Phi_N(t):=e^t-\sum_{j=0}^{N-2}\frac{t^j}{j!}=\sum_{j=N-1}^\infty\frac{t^j}{j!}$ for $t\geq 0$. We also refer to \cite{D} concerning the related work to \eqref{scaling-invari-MT}. Here we stress that $C_{N,\alpha_N} = + \infty$, which is different from the bounded domain case. 
Moreover, the following estimates of $C_{N,\alpha}$ as $\alpha \nearrow \alpha_N$ 
were derived in \cite{CST-14, LLZ}: 
for $\gamma \in (0,1)$, 
	\begin{equation}\label{1-3}
\frac{c_N}{1-\gamma^{N-1}}\leq
		C_{N,\gamma \alpha_N} \leq \frac{\tilde c_N}{1-\gamma^{N-1}}, 
	\end{equation}
where $c_N$ and $\tilde c_N$ are positive constants depending only on $N$. 
We also refer to \cite{KSW,NW,OgOz,O,O2} for other extensions of the Moser-Trudinger type 
inequalities in various directions.

Next, we state another extension of \eqref{classical-MT} established in \cite{LLZ, LR, R}. 
For $a,b>0$, we define the quantity $D_{N,\alpha}(a,b)$ by
\begin{align}\label{dnalpha-def}
&D_{N,\alpha}(a,b):=\sup_{u\in W^{1,N}(\Bbb R^N),\,\|\nabla u\|_{L^N(\Bbb R^N)}^a+\|u\|_{L^N(\Bbb R^N)}^b=1}\int_{\Bbb R^N}\Phi_N\left(\alpha|u(x)|^{N'}\right)dx.
\end{align}
The works \cite{LR, R} established the finiteness of $D_{N,\alpha_N}(a,b)$ for the case $N = a = b$. 
In \cite{LLZ}, the authors generalized this result by proving 
\begin{align}\label{inhomo-tm}
&D_{N,\alpha}(a,b)
\begin{cases}
&<+\infty\quad\text{if \,}0<\alpha\leq \alpha_N,\\
&=+\infty\quad\text{if \,}\alpha>\alpha_N
\end{cases}
\quad\text{when \,}b\leq N,
\end{align} 
and
\begin{align*}
&D_{N,\alpha}(a,b)
\begin{cases}
&<+\infty\quad\text{if \,}0<\alpha<\alpha_N,\\
&=+\infty\quad\text{if \,}\alpha\geq\alpha_N
\end{cases}
\quad\text{when \,}b>N. 
\end{align*}
It is worth noticing that 
when $\alpha = \alpha_N$, the finiteness of $D_{N,\alpha}(a,b)$ 
varies depending on the size of $b$. 
See also \cite{C, CST-14, LL} for other extensions similar to $D_{N,\alpha}(a,b)$.

Next we turn to the existence and non-existence of a maximizer of $B_{N,\alpha}(\Omega)$, 
$C_{N,\alpha}$ and $D_{N,\alpha}(a,b)$. In \cite{CC}, it was shown that $B_{N,\alpha_N}(\Omega)$ is attained when $\Omega$ is a ball. 
After that, in \cite{F, L}, the existence of a maximizer of $B_{N,\alpha_N}(\Omega)$ was proved for any bounded domains. 
In order to show the existence of a maximizer of $B_{N,\alpha_N}(\Omega)$, 
we need to avoid a lack of compactness caused by 
the concentration of maximizing sequences. 
For a related work, we also refer to \cite{S}. 

For proving the existence of a maximizer of  
$C_{N,\alpha}$ or $D_{N,\alpha}(a,b)$ with $\alpha<\alpha_N$, 
we need to avoid the lack of the compactness. 
In this case, concentration phenomena do not occur (see \cite[Lemma 4.2]{DST-16}) 
and vanishing phenomena are issues 
due to the unboundedness of the domain. 
Concerning the maximizing problem $D_{N,\alpha_N}(a,b)$ with $b\leq N$, 
we may also suffer from the lack of the compactness caused by the concentration.  
In \cite{INW}, the authors showed that $C_{N,\alpha}$ is attained for all $\alpha \in (0,\alpha_N)$. 
On the other hand, the situation becomes more complicated 
for the maximizing problem associated with $D_{N,\alpha}(a,b)$. 
The case $a=b=N$ was discussed in \cite{I, LR, R}. 
More precisely, the case $\alpha = \alpha_N$ and $N \geq 2$ was treated in \cite{LR,R}.
In \cite{I}, the existence of a maximizer was established in the cases $N \geq 3$, $\alpha \in (0,\alpha_N)$ and 
$N=2$, $\alpha \in (\alpha_\ast,\alpha_2]$ with some $0<\alpha_\ast<\alpha_2$.  
Moreover, the author in \cite{I} also proved the non-existence of a maximizer 
when $N=2$ and $0<\alpha \ll 1$. 
For general $a$ and $b$, the authors in \cite{DST-16} showed that $D_{N,\alpha}(a,b)$ is attained when 
$N\geq 2$, $\alpha = \alpha_N$, $a>N'$ and $0<b<N$. 

Finally, we mention the works \cite{Lam,LY-18} in which 
the authors considered the existence 
of a maximizer of the singular Moser-Trudinger inequality, 
namely, the inequality obtained by replacing $dx$ and $0 < \alpha \leq \alpha_N$ 
by $\frac{dx}{|x|^\beta}$ and 
$ 0< \alpha \leq \alpha_{N,\beta}:=(1-\frac{\beta}{N})\alpha_N$ in \eqref{dnalpha-def} 
where $0<\beta < N$. 
In \cite{Lam} the existence 
of a maximizer was proved for the case $\alpha<\alpha_{N,\beta}$ or the case $\alpha=\alpha_{N,\beta}$ with $b<N$. 
On the other hand, in \cite{LY-18}, the case $\alpha = \alpha_{N,\beta}$ with $b=N$ is dealt. 
Here we remark that the singular weight $|x|^{-\beta}$ 
compensates the lack of the compactness due to the vanishing phenomenon. 
This point is different from the non-singular case $\beta=0$.

\medskip

Motivated by the above works, 
in this paper, we extend the result in \cite{DST-16} and 
treat the case $\alpha < \alpha_N$ or the case $a \leq N'$. 
As mentioned above, we need to consider the effect of vanishing phenomenon and 
prove the existence of a threshold $\alpha_\ast \in [0,\alpha_N]$ such that 
there is no maximizer of $D_{N,\alpha}(a,b)$ if $0<\alpha < \alpha_\ast$, 
while there is a maximizer of $D_{N,\alpha}(a,b)$ if $\alpha_\ast < \alpha < \alpha_N$.  
In addition, we also provide some qualitative estimates for $\alpha_\ast$, 
and in particular we give a condition on $a$ and $b$ which yields $\alpha_\ast < \alpha_N$. 

In order to state our first result, 
for each $(a,b)\in(0,\infty)^2$, 
we define the value $\alpha_* = \alpha_\ast (a,b,N)\in[0,\alpha_N]$ by 
\begin{align*}
\alpha_*:=\inf\{
\alpha\in(0,\alpha_N]\,\,\big|\,\text{$D_{N,\alpha}(a,b)$ is attained\,}
\}
\end{align*}
when $D_{N,\alpha}(a,b)$ is attained for some $\alpha\in (0,\alpha_N]$. 
Also we set $\alpha_*=\infty$ when $D_{N,\alpha}(a,b)$ is not attained for any $\alpha\in(0,\alpha_N]$. Our first result now reads as follows. 

\begin{thm}\label{con-compactness}
\noindent
\emph{(i)} 
Let $a,b > 0$ and suppose $\alpha_\ast < \alpha_N$. 
When $b < N$, $D_{N,\alpha}(a,b)$ is attained for $\alpha_\ast < \alpha \leq \alpha_N$. 
When $b \geq N$, $D_{N,\alpha}(a,b)$ is attained for $\alpha_\ast < \alpha < \alpha_N$. 

\medskip

\noindent
\emph{(ii)}
Let $a,b > 0$ and suppose $\alpha_\ast < \alpha_N$.
The function defined by 
	\[
		\alpha \mapsto \frac{(N-1)!}{\alpha^{N-1}} D_{N,\alpha}(a,b) 
		: (\alpha_\ast, \alpha_N) \to \R
	\]
is strictly increasing. Moreover, by setting $D_{N,0}(a,b) = 0$, there holds
\begin{align*}
D_{N,\alpha}(a,b)
\begin{cases}
&=\displaystyle\frac{\alpha^{N-1}}{(N-1)!}\quad\text{for \,}\alpha\in[0,\alpha_*],\\
&>\displaystyle\frac{\alpha^{N-1}}{(N-1)!}\quad\text{for \,}\alpha\in(\alpha_*,\alpha_N), 
\end{cases}
\end{align*}
and in particular 
\begin{align}\label{11}
\alpha_*
=\inf\left\{
\alpha\in(0,\alpha_N)\,\bigg|\,D_{N,\alpha}(a,b)>\frac{\alpha^{N-1}}{(N-1)!}
\right\}.
\end{align}

\medskip

\noindent
\emph{(iii)} 
The constant $\alpha_\ast$ is non-increasing in 
$a$ and $b$, that is, if $(a_1, b_1), (a_2, b_2)\in (0,\infty)^2$ with $a_1\leq a_2$ and $b_1 \leq b_2$, it holds $\alpha_\ast (a_1,b_1,N) \geq \alpha_\ast(a_2,b_2,N)$. 
\end{thm}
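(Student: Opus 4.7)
The plan is to organize the three parts around the concentration-compactness dichotomy for maximizing sequences of $D_{N,\alpha}(a,b)$, driven by two preparatory tools. The first is a \emph{spreading lower bound}: since $\Phi_N(t)\ge t^{N-1}/(N-1)!$, testing against $u_\lambda(x)=\mu(\lambda)\phi(x/\lambda)$ with $\phi\in C_c^\infty(\R^N)$, $\lambda\to\infty$, and $\mu(\lambda)$ fixed by the constraint so that $\|\nabla u_\lambda\|_N\to 0$ and $\|u_\lambda\|_N\to 1$, yields
\[
D_{N,\alpha}(a,b)\ge \frac{\alpha^{N-1}}{(N-1)!}\qquad\text{for every }\alpha\in(0,\alpha_N].
\]
The second is a \emph{vanishing lemma}, whose proof is the main technical obstacle: for $\alpha<\alpha_N$ and any sequence $\{u_n\}$ with $\|\nabla u_n\|_N^a+\|u_n\|_N^b\le 1$ that vanishes in the Lions sense (equivalently, $u_n\to 0$ in $L^q(\R^N)$ for every $q>N$),
\[
\int_{\R^N}\Phi_N(\alpha|u_n|^{N'})\,dx-\frac{\alpha^{N-1}}{(N-1)!}\|u_n\|_N^N\longrightarrow 0.
\]
I would expand in series and control the tail $\sum_{j\ge N}\alpha^j\|u_n\|_{jN'}^{jN'}/j!$ using \eqref{scaling-invari-MT} at some $\alpha'\in(\alpha,\alpha_N)$, bounding the $j\ge J$ part by $C(\alpha/\alpha')^J$. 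Achieving this uniform tail estimate when $\|\nabla u_n\|_N$ may be small is the delicate point, handled by splitting into subsequences with $\|\nabla u_n\|_N\to 0$ (where $u_n\to 0$ strongly in $W^{1,N}$) and $\|\nabla u_n\|_N$ bounded below.

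\textbf{Part (i).} For $\alpha\in(\alpha_\ast,\alpha_N)$, since $\alpha_\ast$ is the infimum of attainers, I can pick $\alpha_0\in(\alpha_\ast,\alpha)$ where $D_{N,\alpha_0}(a,b)$ is attained by some $u_0\not\equiv 0$. Expanding $\int\Phi_N(\alpha|u_0|^{N'})dx=\sum_{j\ge N-1}\alpha^j\|u_0\|_{jN'}^{jN'}/j!$ and using $u_0\not\equiv 0$ gives the chain
\[
D_{N,\alpha}(a,b)\ge\int\Phi_N(\alpha|u_0|^{N'})\,dx>\Bigl(\frac{\alpha}{\alpha_0}\Bigr)^{N-1}\!D_{N,\alpha_0}(a,b)\ge\frac{\alpha^{N-1}}{(N-1)!},
\]
the last step by the spreading bound at $\alpha_0$. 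I would then take a Schwarz-symmetrized maximizing sequence $\{u_n\}$ for $D_{N,\alpha}(a,b)$ and pass to a weak limit $u\in W^{1,N}(\R^N)$. Concentration at a point is excluded by subcriticality $\alpha<\alpha_N$ via \eqref{scaling-invari-MT}--\eqref{1-3}; for $b<N$ and $\alpha=\alpha_N$, the refined analysis of \cite{LR,R,DST-16} based on \eqref{inhomo-tm} rules it out as well. Vanishing is excluded by the lemma combined with the strict inequality above. Hence $u\not\equiv 0$, and a Brezis--Lieb decomposition together with weak lower semicontinuity of the constraint functional shows that $u$ attains $D_{N,\alpha}(a,b)$.

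\textbf{Part (ii).} Strict monotonicity of $\alpha\mapsto\alpha^{-(N-1)}D_{N,\alpha}(a,b)$ on $(\alpha_\ast,\alpha_N)$ is exactly the series estimate used in Part (i), applied to a maximizer at any chosen $\alpha_0\in(\alpha_\ast,\alpha)$; the inequality $D_{N,\alpha}(a,b)>\alpha^{N-1}/(N-1)!$ on $(\alpha_\ast,\alpha_N)$ has already been established along the way. For $\alpha\in[0,\alpha_\ast]$ I argue by contradiction: if $D_{N,\alpha_1}(a,b)>\alpha_1^{N-1}/(N-1)!$ for some $\alpha_1\le\alpha_\ast$, take a near-optimizer $u$ at level $\alpha_1$; continuity in $\alpha$ of $\int\Phi_N(\alpha|u|^{N'})dx$ (by dominated convergence against a slightly larger subcritical exponent) propagates the strict inequality to an open neighborhood of $\alpha_1$, yielding some $\alpha'<\alpha_\ast$ with $D_{N,\alpha'}(a,b)>\alpha'^{N-1}/(N-1)!$. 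Running the scheme of (i) at $\alpha'$ produces a maximizer there, contradicting the definition of $\alpha_\ast$. Hence $D_{N,\alpha}(a,b)=\alpha^{N-1}/(N-1)!$ on $[0,\alpha_\ast]$, and \eqref{11} follows.

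\textbf{Part (iii).} Given $a_1\le a_2$ and $b_1\le b_2$, any $u$ with $\|\nabla u\|_N^{a_1}+\|u\|_N^{b_1}=1$ satisfies $\|\nabla u\|_N,\|u\|_N\le 1$, hence $\|\nabla u\|_N^{a_2}+\|u\|_N^{b_2}\le 1$; scaling by $t\ge 1$ to land on the $(a_2,b_2)$-constraint only increases $\int\Phi_N(\alpha|\cdot|^{N'})dx$. Taking suprema gives $D_{N,\alpha}(a_1,b_1)\le D_{N,\alpha}(a_2,b_2)$ for every $\alpha$, and by \eqref{11} the set of $\alpha$ where strict inequality against $\alpha^{N-1}/(N-1)!$ holds only enlarges, so $\alpha_\ast(a_2,b_2,N)\le\alpha_\ast(a_1,b_1,N)$.
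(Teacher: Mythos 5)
Your strategy is essentially the paper's: the spreading lower bound is \eqref{22}, the chain of inequalities built from a maximizer at a smaller exponent is Lemma \ref{con-alpha-inc2}, the contradiction-plus-continuity argument for $\alpha\in[0,\alpha_\ast]$ and the nested-constraint argument for (iii) are exactly as in Section \ref{sec2}. The genuine gap is in the step of Part (i) where you pass from ``the weak limit $u$ is nonzero'' to ``$u$ attains $D_{N,\alpha}(a,b)$'' by invoking ``a Brezis--Lieb decomposition together with weak lower semicontinuity of the constraint functional.'' Those two facts do not suffice. For your radial maximizing sequence the decomposition gives $D_{N,\alpha}(a,b)=\int_{\RN}\Phi_N(\alpha|u|^{N'})\,dx+\frac{\alpha^{N-1}}{(N-1)!}\,\delta$, where $\delta=\lim_n\bigl(\|u_n\|_N^N-\|u\|_N^N\bigr)\ge 0$ is the $L^N$-mass escaping to infinity (the higher-order part $\Psi_N$ passes to the limit by radial compactness, but the quadratic-type term $\frac{\alpha^{N-1}}{(N-1)!}\|u_n\|_N^N$ does not). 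Weak lower semicontinuity only tells you that $u$ is admissible, i.e. $\int_{\RN}\Phi_N(\alpha|u|^{N'})\,dx\le D_{N,\alpha}(a,b)$; it gives no information about $\delta$, and the strict inequality $D_{N,\alpha}(a,b)>\frac{\alpha^{N-1}}{(N-1)!}$ only excludes total vanishing ($u\equiv 0$), not the dichotomy scenario $u\not\equiv 0$, $\delta>0$, in which $u$ is admissible but not a maximizer.

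What closes this hole in the paper is the dilation comparison of Lemma \ref{une0-max}: if $\delta>0$, set $v_0(x):=u(\tau_0^{-1}x)$ with $\tau_0^N\|u\|_N^N=\|u\|_N^N+\delta$; since in dimension $N$ the gradient norm is dilation invariant, $v_0$ is still admissible by weak lower semicontinuity, while $\int_{\RN}\Phi_N(\alpha|v_0|^{N'})\,dx=\tau_0^N\bigl(\frac{\alpha^{N-1}}{(N-1)!}\|u\|_N^N+\int_{\RN}\Psi_N(\alpha|u|^{N'})\,dx\bigr)$ strictly exceeds $\frac{\alpha^{N-1}}{(N-1)!}(\|u\|_N^N+\delta)+\int_{\RN}\Psi_N(\alpha|u|^{N'})\,dx=D_{N,\alpha}(a,b)$, because $\tau_0>1$ and $\Psi_N(\alpha|u|^{N'})\not\equiv 0$. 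This contradiction forces $\delta=0$, after which a further scaling argument (the $t_0>1$ step in Lemma \ref{une0-max}) shows the constraint is saturated and $u$ is a maximizer. You need to supply this argument (or an equivalent strict-subadditivity statement); as written, your compactness step would also leave Part (ii) incomplete, since your treatment of $[0,\alpha_\ast]$ relies on ``strict inequality implies attainment.'' A smaller point: your vanishing lemma is stated only for $\alpha<\alpha_N$, so for the case $\alpha=\alpha_N$, $b<N$ you must first secure $\limsup_n\|\nabla u_n\|_N<1$ (as in Proposition \ref{>-compact} (ii), via \cite[Lemma 4.2]{DST-16}) and then rerun the tail estimate with a slightly enlarged exponent as in Lemma \ref{compact-lem1} (ii); this adaptation is routine but should be said.
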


Next, we give qualitative estimates on $\alpha_\ast$. To this end, we introduce $B_{GN}$ as 
the best constant of the following Gagliardo--Nirenberg inequality: 
	\[
		B_{GN} := \sup_{ u \in W^{1,N}(\RN) \setminus \{0\} } 
		\frac{ \| u \|_{L^{NN'}(\Bbb R^N)}^{NN'} }{ \| u \|_{L^N(\Bbb R^N)}^N \| \nabla u \|_{L^N(\Bbb R^N)}^{NN'-N} } \in (0,\infty). 
	\]
Then we have the following result. 

\begin{thm}\label{alpha*estimates}
\emph{(i)} 
For any $a>N'$ and $b>0$, there holds $\alpha_*=0$.

\medskip

\noindent
\emph{(ii)} 
For any $a\leq N'$ and $b>0$, there holds $\alpha_*>0$.

\medskip

\noindent
\emph{(iii)} 
Fix $\hat{\alpha} \in (0,\alpha_N)$ and $\hat{b} > \frac{N^2}{\hat{\alpha} B_{GN}}$. 
Then there exists an $a_0 = a_0(\hat{\alpha}, \hat{b}, N) \in (0,N')$ such that 
$D_{N,\alpha}(a,b)$ is attained for $(\alpha,a,b) \in A_1 \cup A_2$ 
where $A_1 :=\{ \alpha_N \} \times (a_0,N'] \times [\hat{b}, N)$ 
and $A_2 := [\hat{\alpha}, \alpha_N) \times (a_0,N'] \times [\hat{b}, \infty)$. 
In particular, $\alpha_\ast \leq \hat{\alpha} < \alpha_N$ holds 
for $(a,b) \in (a_0,N'] \times [\hat{b}, \infty)$. 
\end{thm}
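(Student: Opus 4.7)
Throughout, by formula \eqref{11}, it suffices to analyze when $D_{N,\alpha}(a,b) > \alpha^{N-1}/(N-1)!$. The central tool is the following scaling: given $u \in W^{1,N}(\R^N)$ with $\|u\|_{L^N(\R^N)} = 1$ and $\|\nabla u\|_{L^N(\R^N)} > 0$, $\delta \in (0,1)$, and the exponents $(a,b)$, set
\[
v_{a,\delta}(x) := \frac{\delta^{1/a}}{\|\nabla u\|_{L^N}}\, u\!\left(\frac{x}{\lambda_{a,\delta}}\right), \qquad \lambda_{a,\delta} := \frac{(1-\delta)^{1/b}\|\nabla u\|_{L^N}}{\delta^{1/a}}.
\]
A direct computation shows $\|\nabla v_{a,\delta}\|_{L^N}^a = \delta$ and $\|v_{a,\delta}\|_{L^N}^b = 1-\delta$ (so $v_{a,\delta}$ is admissible), together with $\|v_{a,\delta}\|_{L^{jN'}}^{jN'} = \delta^{(j-N+1)N'/a}(1-\delta)^{N/b}\,\|u\|_{L^{jN'}}^{jN'}/\|\nabla u\|_{L^N}^{(j-N+1)N'}$ for $j \geq N-1$. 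Setting $A_k(u) := \|u\|_{L^{(k+N-1)N'}}^{(k+N-1)N'}/\|\nabla u\|_{L^N}^{kN'}$ and substituting into the Taylor series of $\Phi_N$ yields
\[
\int_{\R^N} \Phi_N(\alpha|v_{a,\delta}|^{N'}) dx = \frac{\alpha^{N-1}}{(N-1)!}(1-\delta)^{N/b}\!\left[1 + \frac{\alpha A_1(u)}{N}\delta^{N'/a} + \sum_{k \geq 2}\frac{\alpha^k(N-1)!\,A_k(u)}{(k+N-1)!}\delta^{kN'/a}\right].
\]

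For (i), assume $a > N'$, so $N'/a < 1$. Since $\delta^{N'/a}$ dominates $\delta$ as $\delta \to 0^+$ while $1-(1-\delta)^{N/b} = O(\delta)$, the right-hand side strictly exceeds $\alpha^{N-1}/(N-1)!$ for all sufficiently small $\delta > 0$, regardless of $\alpha > 0$. Thus $D_{N,\alpha}(a,b) > \alpha^{N-1}/(N-1)!$ for every $\alpha \in (0,\alpha_N)$, and \eqref{11} gives $\alpha_\ast = 0$.

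For (ii), I establish the reverse via Gagliardo--Nirenberg: for $j \geq N$,
\[
\|u\|_{L^{jN'}(\R^N)}^{jN'} \leq C_{j,N}\,\|\nabla u\|_{L^N(\R^N)}^{(j-N+1)N'}\|u\|_{L^N(\R^N)}^N
\]
(valid since $jN' \geq NN' > N$). Summing term by term in the Taylor expansion of $\Phi_N$ gives, for any admissible $u$,
\[
\int_{\R^N}\Phi_N(\alpha|u|^{N'}) dx \leq \frac{\alpha^{N-1}}{(N-1)!}\|u\|_{L^N}^N + \|u\|_{L^N}^N\,\|\nabla u\|_{L^N}^{N'}\,Q(\alpha,\|\nabla u\|_{L^N}),
\]
where $Q(\alpha,s) := \sum_{j \geq N}\alpha^j C_{j,N} s^{(j-N)N'}/j!$ satisfies $Q(\alpha,s) \leq M\alpha^N$ uniformly for $s \in [0,1]$ and $\alpha$ below the convergence radius of this series (positive by the growth of the GN constants and the finiteness of the classical Moser--Trudinger integral). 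Writing $x := \|\nabla u\|_{L^N}^a \in (0,1)$, so $\|u\|_{L^N}^N = (1-x)^{N/b}$ and $\|\nabla u\|_{L^N}^{N'} = x^{N'/a} \leq x$ (since $a \leq N'$), and using $(1-x)^{N/b} \leq 1 - \min(1,N/b)x$, the right-hand side is bounded by $\frac{\alpha^{N-1}}{(N-1)!}(1-\eta x)(1+C\alpha x)$ with $\eta := \min(1,N/b)$, which is strictly less than $\alpha^{N-1}/(N-1)!$ for all $x \in (0,1)$ once $\alpha < \eta/C$. The constraint forces $x > 0$, so the upper bound is strict for every admissible $u$. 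Combined with the vanishing lower bound ($\delta \to 0^+$ in $v_{a,\delta}$), $D_{N,\alpha}(a,b) = \alpha^{N-1}/(N-1)!$ is not attained for $\alpha < \eta/C$, and $\alpha_\ast \geq \eta/C > 0$. The main technical obstacle is the uniform bound on $Q(\alpha,s)$, which rests on controlling the GN constants $C_{j,N}$ in $j$; one can compare $Q$ term by term to a Moser--Trudinger series evaluated below its critical threshold.

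For (iii), fix $\hat\alpha \in (0,\alpha_N)$ and $\hat b > N^2/(\hat\alpha B_{GN})$. By definition of $B_{GN}$, choose $u_0$ with $\|u_0\|_{L^N} = 1$ and $A_1(u_0) > N^2/(\hat\alpha\hat b)$. At $(a,b,\alpha) = (N',\hat b,\hat\alpha)$, $N'/a = 1$, and Bernoulli's inequality yields
\[
\int_{\R^N}\Phi_N(\hat\alpha|v_{N',\delta}|^{N'}) dx \geq \frac{\hat\alpha^{N-1}}{(N-1)!}\left[1 + \delta\Bigl(\tfrac{\hat\alpha A_1(u_0)}{N} - \tfrac{N}{\hat b}\Bigr) + O(\delta^2)\right],
\]
whose coefficient of $\delta$ is strictly positive by construction. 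Fix $\delta_0 > 0$ so small that the bracket exceeds $1$. Treating the integral as a function of $a \in (0,N']$ with $u_0, \delta_0, \hat\alpha, \hat b$ held fixed, each term $\delta_0^{(j-N+1)N'/a}$ depends continuously on $a$, and the bound $\delta_0^{kN'/a} \leq \delta_0^k$ on $a \in (0,N']$ (since $\delta_0 < 1$ and $kN'/a \geq k$) provides uniform majorization by the convergent series $\sum_k \hat\alpha^k \delta_0^k A_k(u_0)(N-1)!/(k+N-1)!$, whose finiteness follows from $u_0 \in W^{1,N}(\R^N)$ and the classical Moser--Trudinger inequality. Hence $a \mapsto \int \Phi_N(\hat\alpha|v_{a,\delta_0}|^{N'}) dx$ is continuous on $(0,N']$, and as it strictly exceeds $\hat\alpha^{N-1}/(N-1)!$ at $a = N'$, the same holds on $(a_0, N']$ for some $a_0 = a_0(\hat\alpha,\hat b,N) \in (0,N')$. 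By \eqref{11}, $\alpha_\ast(a,\hat b,N) < \hat\alpha$ for $a \in (a_0,N']$, and the monotonicity from Theorem~\ref{con-compactness}(iii) extends this to $\alpha_\ast(a,b,N) < \hat\alpha < \alpha_N$ for all $(a,b) \in (a_0,N'] \times [\hat b, \infty)$. Theorem~\ref{con-compactness}(i) then supplies attainment of $D_{N,\alpha}(a,b)$ throughout $A_2$ (where $\alpha \in [\hat\alpha,\alpha_N) \subset (\alpha_\ast,\alpha_N)$) and on $A_1$ (where the restriction $b < N$ permits $\alpha = \alpha_N$).
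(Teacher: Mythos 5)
Your proposal is correct, and for part (ii) it follows a genuinely different — and arguably cleaner — path than the paper. A brief comparison, part by part.

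For (i), both you and the paper run the same computation: take a one-parameter scaling family, normalize onto the constraint, expand $\Phi_N$, and observe that when $a>N'$ the gradient-term contribution $\delta^{N'/a}$ (your $\delta$, the paper's $t$) is of lower order than the loss $O(\delta)$ from the constraint. The paper parametrizes implicitly via $v_t=t^{1/N}v(t^{1/N}x)$ and a normalizing factor $\beta_*(t)$; you parametrize directly by $\delta=\|\nabla v\|_N^a$. The two are reparametrizations of the same family, so this is essentially the paper's argument in different clothing.

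For (ii), the approaches diverge. The paper argues by contradiction: it assumes a maximizer $v$ exists, differentiates along the scaling family to get $g'(1)=0$, and then uses the Gagliardo–Nirenberg inequality $\|v\|_{N'j}^{N'j}\leq C^j j^j\|v\|_N^N\|\nabla v\|_N^{N'j-N}$ to bound the higher-order terms, ultimately contradicting $g'(1)=0$ when $\alpha$ is small. You instead prove the non-existence directly: using the same Gagliardo–Nirenberg estimate you bound the whole integral from above for every admissible $u$ by $\frac{\alpha^{N-1}}{(N-1)!}(1-\eta x)(1+C\alpha x)$ with $x=\|\nabla u\|_N^a>0$, which is strictly below $\frac{\alpha^{N-1}}{(N-1)!}$ once $\alpha<\eta/C$. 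Combined with \eqref{22} (which gives the reverse inequality in the supremum), this shows $D_{N,\alpha}(a,b)=\frac{\alpha^{N-1}}{(N-1)!}$ with strict inequality for each individual $u$, hence no maximizer. This is a more transparent route: it avoids the Euler--Lagrange-type identity $g'(1)=0$ and replaces the contradiction with an explicit uniform bound. The only caveat — which you flag — is that the uniform control of $Q(\alpha,s)$ requires the same $C^j j^j$ growth of the GN constants used by the paper; with that in hand, the convergence radius argument is identical to the paper's.

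For (iii), your argument is close to the paper's but works harder than necessary. The paper observes that since all Taylor coefficients of $\Phi_N$ are nonnegative, it suffices to bound $D_{N,\alpha}$ from below by the \emph{truncated} functional $J_{N,\alpha}(u)=\frac{\alpha^{N-1}}{(N-1)!}\|u\|_N^N+\frac{\alpha^N}{N!}\|u\|_{NN'}^{NN'}$, whose restriction to the scaling family is an explicit, manifestly continuous expression $g_{\alpha,a,b}(t)$; no infinite-series convergence or dominated-convergence argument is needed. You instead keep the full Taylor series and justify continuity in $a$ by a majorization $\delta_0^{kN'/a}\leq\delta_0^k$ together with the Moser--Trudinger finiteness. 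That works, but the truncation trick is worth noting since it eliminates the technical overhead entirely. A further small difference: the paper takes $V$ to be an actual GN maximizer (citing its existence), while you take $u_0$ to be a near-maximizer with $A_1(u_0)>N^2/(\hat\alpha\hat b)$; this slightly weakens what you need to assume. Your extension from $(a,\hat b,\hat\alpha)$ to all of $A_1\cup A_2$ via Theorem~\ref{con-compactness} (i)--(iii) is legitimate (those are established in Section~\ref{sec2}, before this theorem), whereas the paper instead exploits monotonicity of $g_{\alpha,a,b}(t)$ in $\alpha$ and $b$ directly and invokes only Proposition~\ref{>-compact}; both are valid.
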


\begin{rem}
(i) In Appendix \ref{appendix}, we prove the strict inequality 
$\frac{N^2}{\alpha_N B_{GN}} < N$. Therefore, when $\hat{b} \in \left(\frac{N^2}{\alpha_N B_{GN}} , N\right)$, 
we may choose $\hat{\alpha} \in (0,\alpha_N)$ satisfying 
$\frac{N^2}{\hat{\alpha} B_{GN}}<N$. Hence, taking $\hat{b}\in\left(\frac{N^2}{\hat{\alpha} B_{GN}}, N\right)$, we obtain $A_1 \neq \emptyset$ in Theorem \ref{alpha*estimates} (iii). 

\medskip

\noindent
(ii) By applying Theorem \ref{alpha*estimates} (iii), we obtain 
$\alpha_*(N',b,N)\leq\frac{N^2}{b B_{GN}}<\alpha_N$ for $b>\frac{N^2}{\alpha_N B_{GN}}$. 
Indeed, for $b>\frac{N^2}{\alpha_N B_{GN}}$ and $\hat\alpha\in \left(\frac{N^2}{b B_{GN}}, \alpha_N\right)$, Theorem \ref{alpha*estimates} (iii) with $a=N'$ gives 
$\alpha_*(N',b,N)\leq\hat\alpha$. Taking a limit $\hat{\alpha} \to \frac{N^2}{b B_{GN}}$, we have the desired inequality. 
Combining this inequality with Theorem \ref{alpha*estimates} (ii), 
we have $0< \alpha_\ast (N',b,N) \to 0$ as $b \to \infty$. 

\medskip

\noindent
(iii) By Theorem \ref{alpha*estimates} (ii)-(iii), we see that 
when $(a,b)\in(0,N']\times\left(\frac{N^2}{\alpha_NB_{GN}},\infty\right)$ 
and $a$ is close to $N'$, there holds $0<\alpha_*<\alpha_N$. 
Thus, in this case, by Theorem \ref{con-compactness}, 
$D_{N,\alpha}(a,b)$ is attained for all $ \alpha \in (\alpha_* ,\alpha_N)$ 
(resp. $\alpha \in (\alpha_*  ,\alpha_N]$\,) when $b\geq N$ (resp. $b<N$), 
while $D_{N,\alpha}(a,b)$ is not attained for all $\alpha \in (0,\alpha_\ast )$. 
\end{rem}

Finally, we give some comments on our arguments to prove 
Theorems \ref{con-compactness} and \ref{alpha*estimates}. 
We take an approach which is similar to those in \cite{DST-16,I,IW-16}. 
As mentioned above, there might be a lack of compactness for maximizing sequences 
$(u_n)_{n=1}^\infty$, that is, 
the concentration ($\| \nabla u_n \|_{L^N(\Bbb R^N)} \to 1$ as $n \to \infty$) and 
the vanishing ($\| u_n \|_{L^N(\Bbb R^N)} \to 1$ as $n \to \infty$). 
In our case, we can rule out the concentration as in \cite[Lemma 4.2]{DST-16} 
with the aid of the inequality \eqref{1-3}. 
To avoid the vanishing, we shall show that 
the strict inequality $D_{N,\alpha}(a,b) > \frac{\alpha^{N-1}}{(N-1)!}$ is sufficient. 
Then we reveal some properties of $D_{N,\alpha}(a,b)$ and prove 
the existence (or non-existence) of a maximizer depending on the size of parameters $(\alpha,a,b)$. 
Finally, we also provide the proof 
of the strict inequality $\frac{N^2}{\alpha_N B_{GN}} < N$. 

%

\section{Proof of Theorem \ref{con-compactness}}\label{sec2}
\noindent

This section is devoted to proving Theorem \ref{con-compactness}. 
The following proposition becomes a key for the proof of Theorem \ref{con-compactness}. 

\begin{prop}\label{>-compact}
\emph{(i)} 
Let $(a,b)\in(0,\infty)^2$ and $\alpha\in(0,\alpha_N)$, 
and assume $D_{N,\alpha}(a,b)>\frac{\alpha^{N-1}}{(N-1)!}$.  
Then $D_{N,\alpha}(a,b)$ is attained. 

\noindent
\emph{(ii)} Let $\alpha = \alpha_N$, $a>0$ and $b < N$. 
Assume $D_{N,\alpha_N}(a,b) > \frac{\alpha_N^{N-1}}{(N-1)!}$. 
Then $D_{N,\alpha_N}(a,b)$ is attained. 
\end{prop}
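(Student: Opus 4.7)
The plan follows the standard approach to Moser--Trudinger maximizing problems on $\RN$: symmetrize a maximizing sequence, rule out all mechanisms for loss of compactness, and identify the weak limit as a maximizer. Let $(u_n)$ be a maximizing sequence with $\|\nabla u_n\|_{L^N}^a+\|u_n\|_{L^N}^b=1$. Since Schwarz symmetrization preserves $L^p$-norms and the $\Phi_N$-integral and does not increase $\|\nabla u_n\|_{L^N}$, followed if needed by a rescaling to restore the constraint, I may assume $u_n\geq 0$ is radially non-increasing. Along a subsequence, $u_n\rightharpoonup u$ in $W^{1,N}(\RN)$ and pointwise a.e., and $\|\nabla u_n\|_{L^N}^a\to A$, $\|u_n\|_{L^N}^b\to B$ with $A+B=1$.

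I first exclude concentration ($A=1$, $B=0$) by normalising $v_n:=u_n/\|\nabla u_n\|_{L^N}$, so $\|\nabla v_n\|_{L^N}=1$ and $\|v_n\|_{L^N}\to 0$. In case (i) the scaling-invariant estimate \eqref{scaling-invari-MT} gives $\int\Phi_N(\alpha|u_n|^{N'})\,dx\leq C_{N,\alpha}\|v_n\|_{L^N}^N\to 0$, contradicting $D_{N,\alpha}(a,b)>0$. In case (ii) with $\alpha=\alpha_N$ and $b<N$, the sharp estimate \eqref{1-3} with $\gamma_n=\|\nabla u_n\|_{L^N}^{N'}\to 1^-$ yields
\[
	\int_{\RN}\Phi_N(\alpha_N|u_n|^{N'})\,dx\leq\frac{\tilde c_N\,\|u_n\|_{L^N}^N}{\|\nabla u_n\|_{L^N}^N\bigl(1-\|\nabla u_n\|_{L^N}^N\bigr)}.
\]
Writing $t_n:=1-\|\nabla u_n\|_{L^N}^a\to 0^+$, the constraint gives $\|u_n\|_{L^N}^N$ of order $t_n^{N/b}$ while $1-\|\nabla u_n\|_{L^N}^N$ is of order $t_n$, so the right-hand side is of order $t_n^{N/b-1}\to 0$ since $b<N$, once again a contradiction.

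To rule out gradient-vanishing ($A=0$) and the mass-escape scenario ($u\equiv 0$ with $B>0$), I use Gagliardo--Nirenberg bounds $\|u_n\|_{L^{jN'}}^{jN'}\lesssim\|u_n\|_{L^N}^N\|\nabla u_n\|_{L^N}^{jN'-N}$ ($j\geq N$) together with the compact radial embedding $W^{1,N}_{\mathrm{rad}}(\RN)\hookrightarrow L^p(\RN)$ for $p>N$ (for the mass-escape case). In either scenario this produces $\int\tilde\Phi_N(\alpha|u_n|^{N'})\,dx\to 0$, where $\tilde\Phi_N(t):=\Phi_N(t)-t^{N-1}/(N-1)!$, and therefore $\int\Phi_N(\alpha|u_n|^{N'})\,dx$ reduces asymptotically to $\alpha^{N-1}\|u_n\|_{L^N}^N/(N-1)!\leq\alpha^{N-1}/(N-1)!$, contradicting the strict hypothesis.

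Hence $A,B\in(0,1)$ and $u\not\equiv 0$. By Fatou and weak lower semicontinuity, $\|\nabla u\|_{L^N}^a+\|u\|_{L^N}^b\leq 1$, so there is a $\lambda\geq 1$ with $\lambda^a\|\nabla u\|_{L^N}^a+\lambda^b\|u\|_{L^N}^b=1$. A Brezis--Lieb type analysis, exploiting strong $L^{jN'}$-convergence $(j\geq N)$ in the radial setting, yields
\[
	D_{N,\alpha}(a,b)=\frac{\alpha^{N-1}}{(N-1)!}\bigl(\|u\|_{L^N}^N+m\bigr)+\int_{\RN}\tilde\Phi_N(\alpha|u|^{N'})\,dx,
\]
with $m:=\lim_{n\to\infty}\|u_n-u\|_{L^N}^N\geq 0$. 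Comparing this identity with $D_{N,\alpha}(a,b)\geq\int\Phi_N(\alpha|\lambda u|^{N'})\,dx$ and using strict monotonicity of $\tilde\Phi_N$ forces $\lambda=1$ and $m=0$, hence $\int\Phi_N(\alpha|u|^{N'})\,dx=D_{N,\alpha}(a,b)$. The main obstacle throughout is the mass-escape scenario, since neither concentration nor gradient vanishing directly captures it; the strict hypothesis $D_{N,\alpha}(a,b)>\alpha^{N-1}/(N-1)!$ is precisely what rules it out, as in that regime the $\Phi_N$-integral collapses to its leading linear term in $\|u_n\|_{L^N}^N$. In case (ii), the sharp Carleson--Chang-type estimate \eqref{1-3} is additionally essential since $C_{N,\alpha_N}=+\infty$.
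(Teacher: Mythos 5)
Your overall strategy mirrors the paper's: symmetrize a maximizing sequence, pass the tail $\int\Psi_N(\alpha|u_n|^{N'})\,dx$ to the limit by a Strauss-type compactness argument, use the strict hypothesis $D_{N,\alpha}(a,b)>\alpha^{N-1}/(N-1)!$ to exclude the weak limit being zero, and then show the nonzero weak limit is a maximizer. Your exclusion of concentration in case (ii) by a direct computation from \eqref{1-3} (rather than citing \cite[Lemma~4.2]{DST-16} as the paper does) is correct and somewhat more self-contained, and your Brezis--Lieb identity $D_{N,\alpha}(a,b)=\frac{\alpha^{N-1}}{(N-1)!}\bigl(\|u\|_N^N+m\bigr)+\int_{\RN}\Psi_N(\alpha|u|^{N'})\,dx$ is the same bookkeeping that drives the paper's Lemma~\ref{une0-max}.

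The gap is in your last step. You normalize by a scalar $\lambda\geq 1$ with $\lambda^a\|\nabla u\|_N^a+\lambda^b\|u\|_N^b=1$ and claim that comparing $D_{N,\alpha}(a,b)\geq\int_{\RN}\Phi_N(\alpha|\lambda u|^{N'})\,dx$ with the identity above forces $\lambda=1$ and $m=0$ by strict monotonicity of $\Psi_N$. Carrying out the comparison actually yields only $(\lambda^N-1)\|u\|_N^N<m$ whenever $\lambda>1$, which is entirely consistent with $\lambda>1$ and $m>0$ occurring simultaneously; there is no contradiction. The difficulty is that scalar multiplication scales $\|\nabla u\|_N$ and $\|u\|_N$ in lockstep and so cannot isolate the mass defect $m$. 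What is needed first is a spatial dilation: set $w(x):=u(\sigma^{-1}x)$ with $\sigma^N:=(\|u\|_N^N+m)/\|u\|_N^N\geq1$, so that $\|\nabla w\|_N=\|\nabla u\|_N$ (the gradient $L^N$-norm is dilation invariant in $\RN$) while $\|w\|_N^N=\|u\|_N^N+m=\lim_n\|u_n\|_N^N$. By weak lower semicontinuity of $\|\nabla\cdot\|_N$ together with the constraint $\|\nabla u_n\|_N^a+\|u_n\|_N^b=1$, the function $w$ is admissible, and a change of variables gives $\int_{\RN}\Phi_N(\alpha|w|^{N'})\,dx=\sigma^N\int_{\RN}\Phi_N(\alpha|u|^{N'})\,dx=D_{N,\alpha}(a,b)+(\sigma^N-1)\int_{\RN}\Psi_N(\alpha|u|^{N'})\,dx$, which strictly exceeds $D_{N,\alpha}(a,b)$ whenever $m>0$ and $u\not\equiv 0$, a contradiction forcing $m=0$. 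Only after $m=0$ does your scalar argument correctly give $\lambda=1$ and hence $\|\nabla u_n\|_N\to\|\nabla u\|_N$. This two-stage argument (dilation in $x$ to remove the mass defect, then scalar multiplication to saturate the constraint) is exactly what the paper's Lemma~\ref{une0-max} carries out with $\tau_0$ and $t_0$.
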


\noindent
In order to prove Proposition \ref{>-compact}, we prepare several lemmas. 
Take a maximizing sequence $\{u_n\}_{n\in\Bbb N}\subset W^{1,N}(\Bbb R^N)$ associated with $D_{N,\alpha}(a,b)$, 
i.e., the functions $\{u_n\}_{n\in\Bbb N}$ satisfy $\|\nabla u_n\|_N^a+\|u_n\|_N^b=1$ and 
$\int_{\Bbb R^N}\Phi_N(\alpha|u_n|^{N'}) dx \to D_{N,\alpha}(a,b)$ as $n\to\infty$. 
Then up to a subsequence, there exists some $u_0\in W^{1,N}(\Bbb R^N)$ such that $u_n\rightharpoonup u_0$ weakly in $W^{1,N}(\Bbb R^N)$ 
as $n\to\infty$. We may assume that the functions $\{u_n\}_{n\in\Bbb N}$ are non-negative, 
radially symmetric and non-increasing in the radial direction by virtue of the radially symmetric rearrangement. 
We start with the following fact and a similar statement is proved  in \cite[Lemma 5.1]{DST-16}. 
\begin{lem}\label{compact-lem1}
Assume either \emph{(i)} $\alpha < \alpha_N$, $(a,b) \in (0,\infty)^2$ 
or else \emph{(ii)} $\alpha = \alpha_N$, $a>0$ and $0 < b \leq N$. 
Let $\{u_n\}_{n=1}^\infty \subset W^{1,N}(\RN)$ be a radial 
maximizing sequence for $D_{N,\alpha}(a,b)$ with 
$u_n \rightharpoonup u_0$ weakly in $W^{1,N}(\RN)$. 
In addition, when $\alpha = \alpha_N$, suppose 
$\limsup_{n \to \infty} \| \nabla u_n \|_N < 1$. 
Then as $n \to \infty$, 
\begin{align*}
&\int_{\Bbb R^N} \Psi_N( \alpha |u_n|^{N'}  ) dx 
\to 
\int_{\Bbb R^N} \Psi_N(\alpha |u_0|^{N'} ) dx
\end{align*}
where 
	\[
		\Psi_N(s) := \Phi_N(s) - \frac{s^{N-1}}{(N-1)!} 
		= \sum_{j=N}^\infty \frac{s^j}{j!}. 
	\]
\end{lem}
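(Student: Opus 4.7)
My plan is to split $\RN = B_R \cup (\RN \setminus B_R)$ and argue in each region separately, exploiting the fact that $\Psi_N(s) = \sum_{j\geq N} s^j/j!$ vanishes at $s=0$ to one order higher than $\Phi_N$. On $B_R$ I would use local compactness plus equi-integrability (Vitali's theorem), while on the complement I would use radial decay combined with the higher vanishing order of $\Psi_N$ to bound the integral by a higher $L^p$ tail that is uniformly small for $R$ large.

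For the tail, since each $u_n$ is radial, non-increasing, and $\|u_n\|_N \leq 1$ by the constraint, the Strauss-type estimate $|u(r)|^N \cdot \text{vol}(B_r) \leq \|u\|_N^N$ yields $|u_n(x)| \leq C|x|^{-1}$ uniformly in $n$, and likewise for $u_0$. Because $\Psi_N(s) \leq c_0 s^N$ on any bounded neighborhood of $0$, for $R$ large one has $\Psi_N(\alpha|u_n|^{N'}) \leq c\,|u_n|^{NN'}$ on $\{|x|\geq R\}$, whence
\[
\int_{|x|\geq R} \Psi_N(\alpha |u_n|^{N'})\,dx \;\leq\; c \Big(\sup_{|x|\geq R} |u_n|\Big)^{NN'-N} \|u_n\|_N^N \;\leq\; \frac{c}{R^{NN' - N}},
\]
uniformly in $n$, and the same bound applies to $u_0$. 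Hence both tails become arbitrarily small by taking $R$ large, independently of $n$.

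On $B_R$, Rellich--Kondrachov yields $u_n \to u_0$ a.e.\ along a subsequence, so $\Psi_N(\alpha |u_n|^{N'}) \to \Psi_N(\alpha |u_0|^{N'})$ a.e. I would then apply Vitali's theorem; the nontrivial ingredient is equi-integrability of $\{\Phi_N(\alpha |u_n|^{N'})\}$ on $B_R$. To obtain it, choose $q>1$ with $q\,\alpha\,\|\nabla u_n\|_N^{N'} < \alpha_N$ uniformly for large $n$: in case (i) this is immediate from $\alpha<\alpha_N$ and $\|\nabla u_n\|_N \leq 1$, while in case (ii) it uses precisely the standing hypothesis $\limsup \|\nabla u_n\|_N < 1$. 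Rescaling $w_n := u_n / \|\nabla u_n\|_N$ (the degenerate case $\nabla u_n \equiv 0$ being trivial) and applying \eqref{scaling-invari-MT} to $w_n$ gives $\int_{\RN} \Phi_N(q\alpha |u_n|^{N'})\,dx \leq C$ uniformly. Combined with the elementary pointwise bound $\Phi_N(s)^q \leq C_{N,q}\bigl(\Phi_N(qs)+1\bigr)$ and H\"older's inequality, this yields $\int_E \Phi_N(\alpha |u_n|^{N'})\,dx \leq C\,|E|^{1-1/q}$ for any measurable $E \subset B_R$, which is equi-integrability.

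The delicate point --- and what I expect to be the main obstacle --- is the equi-integrability step, since it depends on having strict room between $\alpha\,\|\nabla u_n\|_N^{N'}$ and the critical threshold $\alpha_N$. This is automatic in case (i) but is exactly why the extra assumption $\limsup \|\nabla u_n\|_N < 1$ is imposed in case (ii); without it, a concentration scenario could allow the borderline MT integral to blow up. Once both regions are controlled, the full convergence follows by the standard $\varepsilon/2$ argument: first fix $R$ large enough to make the exterior integrals below $\varepsilon/2$ for every $n$ and for $u_0$, then take $n\to\infty$ on $B_R$ via Vitali.
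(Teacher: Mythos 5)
Your overall architecture (splitting into $B_R$ and its complement, killing the tail via radial decay plus the higher-order vanishing of $\Psi_N$, and Vitali on $B_R$) is sound and is essentially a hands-on version of the Strauss-type compactness argument the paper invokes. However, the equi-integrability step contains a genuine gap. Applying \eqref{scaling-invari-MT} to $w_n := u_n/\|\nabla u_n\|_N$ at the exponent $q\alpha\|\nabla u_n\|_N^{N'}$ does \emph{not} give a uniform bound: it gives
\[
\int_{\RN}\Phi_N\left(q\alpha|u_n|^{N'}\right)dx
\;\leq\; C_{N,\,q\alpha\|\nabla u_n\|_N^{N'}}\,\frac{\|u_n\|_N^N}{\|\nabla u_n\|_N^N},
\]
and while the constant is uniformly controlled (your exponents stay away from $\alpha_N$), the factor $\|\nabla u_n\|_N^{-N}$ is not. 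Nothing in the hypotheses keeps $\|\nabla u_n\|_N$ away from $0$; your parenthetical about the "degenerate case $\nabla u_n\equiv 0$" does not address $\|\nabla u_n\|_N\to 0$ with $u_n\not\equiv 0$. This is not a removable pathology: the lemma is applied (see Lemma \ref{vn-level}) precisely to possibly vanishing maximizing sequences, where $\|\nabla u_n\|_N\to 0$ and $\|u_n\|_N\to 1$, which do occur whenever $D_{N,\alpha}(a,b)=\alpha^{N-1}/(N-1)!$. So the claimed uniform bound $\int\Phi_N(q\alpha|u_n|^{N'})\,dx\le C$, and hence the Vitali step, is unjustified as written.

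The missing ingredient is exactly the paper's inequality \eqref{1}: for $\beta<\alpha_N$ and $\|\nabla u\|_N\le 1$ one has $\int_{\RN}\Phi_N(\beta|u|^{N'})\,dx\le C_\beta\|u\|_N^N$, with no $\|\nabla u\|_N^{-N}$ on the right. It follows from \eqref{2} together with the elementary observation that $N'j-N\ge 0$ for $j\ge N-1$, whence $\Phi_N(\beta|u|^{N'})\le\|\nabla u\|_N^N\,\Phi_N\bigl(\beta\,|u/\|\nabla u\|_N|^{N'}\bigr)$ when $\|\nabla u\|_N\le 1$; the prefactor $\|\nabla u\|_N^N$ cancels the bad normalization. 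With \eqref{1} in place of your raw use of \eqref{scaling-invari-MT} (take $\beta=q\alpha$ in case (i); in case (ii) write $q\alpha_N|u_n|^{N'}=q\alpha_N\theta^{N'}|u_n/\theta|^{N'}$ with $\limsup_n\|\nabla u_n\|_N<\theta<1$ and $q\theta^{N'}<1$, so that \eqref{1} applies to $u_n/\theta$ at a subcritical exponent), your argument closes. Two minor remarks: your tail estimate uses monotonicity of $u_n$, which is legitimate only because the sequence has been rearranged (alternatively, the Strauss radial estimate $|u_n(x)|\le C|x|^{-(N-1)/N}$ needs only the $W^{1,N}$ bound), and the a.e.\ subsequence extraction should be accompanied by the standard remark that the limit is independent of the subsequence.
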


\begin{proof}
This follows from Strauss' lemma (see \cite{BL-83-1,S-77} ). In fact, 
when the assumption (i) holds, put 
	\[
		P(s) := \Psi_N(\alpha |s|^{N'}) 
		= \sum_{j=N}^\infty \frac{(\alpha |s|^{N'} )^j }{j!}, \quad 
		Q(s) := \Phi_N(\beta |s|^{N'})
	\]
where $\alpha < \beta < \alpha_N$. 
We first notice that 
	\[
		\lim_{|s| \to 0} \frac{P(s)}{Q(s)} = 0 = \lim_{|s| \to \infty} \frac{P(s)}{Q(s)}.
	\]
Second, we shall claim
	\begin{equation}\label{1}
		\int_{\RN} \Phi_N \left( \beta |u(x)|^{N'} \right) dx 
		\leq C_\beta \| u \|_{N}^N \quad 
		{\rm for\ all} \ u \in W^{1,N}(\RN) \quad {\rm with} \ \| \nabla u \|_{N} \leq 1.
	\end{equation}
Indeed, when $N=2$, this is proved in \cite[Lemma 1]{BJT-08} and we follow 
the argument there for $N\geq 2$. Since \eqref{1} clearly holds for $u \equiv 0$, 
we assume $u \not \equiv 0$. 
From  \cite{AT,LR,R}, we notice that 
	\begin{equation}\label{2}
		\| \nabla v \|_{N}^N \int_{\RN} \Phi_N \left( \beta 
		\left|\frac{v(x)}{\| \nabla v \|_{N}}\right|^{N'} \right) 
		dx \leq C_\beta \| v \|_{N}^N \quad 
		{\rm for\ all} \ v \in W^{1,N}(\RN) \setminus \{0\}.
	\end{equation}
If $ u \in W^{1,N}(\RN) \setminus \{0\}$ satisfies $\| \nabla u \|_{N} \leq 1$, 
then from $-N + N'j \geq 0$ for all $j \geq N-1$, we have  
	\[
		\begin{aligned}
			\Phi_N \left( \beta |u(x)|^{N'} \right) 
			&= \sum_{j=N-1}^\infty \frac{\beta^j}{j!} |u(x)|^{N'j}
			\\
			&\leq \sum_{j=N-1}^\infty \frac{\beta^j}{j!} 
			\frac{|u(x)|^{N'j}}{\| \nabla u \|_{N}^{-N + N'j}  }
			\\
			&= \| \nabla u \|_{N}^N 
			\sum_{j=N-1}^\infty \frac{\beta^j}{j!} 
			\frac{|u(x)|^{N'j}}{\| \nabla u \|_{N}^{N'j}  }
			\\
			&=\| \nabla u \|_{N}^N 
			\Phi_N \left( \beta \left| \frac{u(x)}{\|\nabla u \|_{N}} \right|^{N'} \right).
		\end{aligned}
	\]
Thus by \eqref{2}, we obtain \eqref{1}. 

	Now from \eqref{1} and $ \| \nabla u_n \|_N \leq 1$, it is obvious to see 
	\[
		\sup_{n \geq 1} \int_{\RN} Q(|u_n|) d x < +\infty.
	\]
Furthermore, since $u_n$ is radial, we see that
$| u_n(x) | \leq C |x|^{-\frac{N-1}{N}}  $ holds for every $n \geq 1$ and $|x| \geq 1$, 
where $C$ is independent of $n$. Therefore, applying Strauss' lemma, we get 
	\[
		\int_{\RN} P(u_n) dx \to \int_{\RN} P(u_0) d x.
	\]

	On the other hand, when the assumption (ii) holds, put 
	\[
		P(s) := \Phi_N \left(\alpha_N |s|^{N'} \right), \quad 
		Q(s) := \Phi_N \left((1+\e_0)^{N'} \alpha_N |s|^{N'} \right), 
	\]
where $\e_0>0$ is chosen so that 
$(1+2 \e_0) \| \nabla u_n \|_N \leq 1$. 
Since 
	\[
		(1 + \e_0)^{N'} \alpha_N |u_n|^{N'} 
		= \left\{  \left( \frac{1+\e_0}{1+2\e_0} \right)^{N'} \alpha_N \right\}
		| (1+2\e_0) u_n |^{N'}, \quad 
		\left( \frac{1+\e_0}{1+2\e_0} \right)^{N'} \alpha_N < \alpha_N,
	\]
we observe from \eqref{1} that 
	\[
		\sup_{n \geq 1} \int_{\RN} Q(|u_n|) dx
		= \sup_{n \geq 1} \int_{\RN} 
		\Phi_N \left( \left\{ \frac{1+\e_0}{1+2\e_0} \right\}^{N'} \alpha_N 
		\left| (1+2\e_0) u_n \right|^{N'}  \right) dx
		 < +\infty.
	\]
The rest of the proof is same to the case (i), 
hence Lemma \ref{compact-lem1} holds. 
\end{proof}

\begin{lem}\label{une0-max}
Assume the same conditions to Lemma \ref{compact-lem1}. 
In addition, suppose $u_0 \not\equiv 0$. 
Then $D_{N,\alpha}(a,b)$ is attained by $u_0$. 
\end{lem}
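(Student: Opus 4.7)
The plan is to extract a clean identity for $D_{N,\alpha}(a,b)$ from Lemma \ref{compact-lem1}, then rule out any loss of $L^N$ mass in the limit via an $N$-dilation argument.

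First I will use the decomposition $\Phi_N(s)=\frac{s^{N-1}}{(N-1)!}+\Psi_N(s)$ together with the identity $|u|^{N'(N-1)}=|u|^N$ to write
\[
\int_{\RN}\Phi_N(\alpha|u_n|^{N'})\,dx = \frac{\alpha^{N-1}}{(N-1)!}\|u_n\|_N^N + \int_{\RN}\Psi_N(\alpha|u_n|^{N'})\,dx.
\]
By Lemma \ref{compact-lem1} the last integral converges to $\int_{\RN}\Psi_N(\alpha|u_0|^{N'})\,dx$, so, passing to a subsequence, $s_\infty:=\lim_n\|u_n\|_N^N$ exists and
\[
D_{N,\alpha}(a,b) = m_\alpha + \frac{\alpha^{N-1}}{(N-1)!}(s_\infty-s_0),
\]
where $m_\alpha:=\int_{\RN}\Phi_N(\alpha|u_0|^{N'})\,dx$ and $s_0:=\|u_0\|_N^N$. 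Writing $t_0:=\|\nabla u_0\|_N^N$ and $t_\infty:=\lim_n\|\nabla u_n\|_N^N$, weak lower semicontinuity yields $t_0\leq t_\infty$, $s_0\leq s_\infty$, and the constraint passes to the limit as $t_\infty^{a/N}+s_\infty^{b/N}=1$.

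The aim is to prove that $u_0$ saturates the constraint, i.e. $t_0^{a/N}+s_0^{b/N}=1$. Once this holds, strict monotonicity of $(t,s)\mapsto t^{a/N}+s^{b/N}$ in each coordinate combined with $t_0\leq t_\infty$, $s_0\leq s_\infty$ forces $t_0=t_\infty$ and $s_0=s_\infty$; the identity above then gives $\int_{\RN}\Phi_N(\alpha|u_0|^{N'})\,dx=D_{N,\alpha}(a,b)$, so $u_0$ is a maximizer.

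To prove saturation, I will argue by contradiction: suppose $t_0^{a/N}+s_0^{b/N}<1$. Since $u_0\not\equiv 0$ forces $s_0>0$ and the limit constraint then forces $t_0^{a/N}\leq t_\infty^{a/N}<1$, the scalar
\[
r_* := \left(\frac{1-t_0^{a/N}}{s_0^{b/N}}\right)^{1/b} > 1
\]
is well defined. I test $D_{N,\alpha}(a,b)$ against the $N$-dilation $u_0^{r_*}(x):=u_0(x/r_*)$. Because $\|\nabla\cdot\|_N$ is dilation-invariant in dimension $N$ and $\|u_0^{r_*}\|_N=r_*\|u_0\|_N$, the function $u_0^{r_*}$ lies on the constraint surface, and a change of variables gives
\[
\int_{\RN}\Phi_N(\alpha|u_0^{r_*}|^{N'})\,dx = r_*^N\,m_\alpha \leq D_{N,\alpha}(a,b).
\]
On the other hand, using $r_*^N s_0 = (1-t_0^{a/N})^{N/b} \geq (1-t_\infty^{a/N})^{N/b} = s_\infty$ (since $x\mapsto(1-x)^{N/b}$ is decreasing on $[0,1]$) together with the identity from the first step,
\[
r_*^N m_\alpha - D_{N,\alpha}(a,b) = \frac{\alpha^{N-1}(r_*^N s_0-s_\infty)}{(N-1)!} + (r_*^N-1)\int_{\RN}\Psi_N(\alpha|u_0|^{N'})\,dx.
\]
The first term is $\geq 0$, and the second term is strictly positive since $r_*>1$ and $\int_{\RN}\Psi_N(\alpha|u_0|^{N'})\,dx>0$ (here $u_0\not\equiv 0$ and $\Psi_N$ majorizes $s^N/N!$). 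This contradicts the upper bound above, proving saturation.

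The main obstacle is selecting the right rescaling. The obvious attempt, multiplicative scaling $\lambda u_0$ with $\lambda\geq 1$ chosen to saturate the constraint, also yields $\int_{\RN}\Phi_N(\alpha\lambda^{N'}|u_0|^{N'})\,dx\leq D_{N,\alpha}(a,b)$, but the higher-order Taylor coefficients of $\Phi_N$ are amplified at different rates under multiplicative scaling and no contradiction emerges. The $N$-dilation is decisive precisely because it rescales $\int\Phi_N$ by the single factor $r_*^N$, matching the scaling of the leading term $\|u\|_N^N$, so the strict positivity $\int_{\RN}\Psi_N(\alpha|u_0|^{N'})\,dx>0$ (which is available only because $u_0\not\equiv 0$) is exactly what forces the contradiction.
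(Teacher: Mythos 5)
Your proof is correct, and it organizes the argument more economically than the paper does. The paper proceeds in two stages: first, with $\tau_0 := \lim_n \|u_n\|_N/\|u_0\|_N \geq 1$, it tests against the $N$-dilation $u_0(\tau_0^{-1}\cdot)$ (same gradient norm, $L^N$ norm $\tau_0\|u_0\|_N$) and rules out $\tau_0 > 1$ via the strict gain $\tau_0^N\int\Psi_N(\alpha|u_0|^{N'})\,dx > \int\Psi_N(\alpha|u_0|^{N'})\,dx$; second, having pinned down $\|u_n\|_N \to \|u_0\|_N$, it rules out a drop in $\|\nabla u_0\|_N$ by a multiplicative rescaling $t_0 u_0$, and uniform convexity finishes. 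You fold both stages into a single $N$-dilation $u_0(\cdot/r_*)$ with $r_*$ chosen to saturate the constraint, combined with the explicit identity $D_{N,\alpha}(a,b) = m_\alpha + \frac{\alpha^{N-1}}{(N-1)!}(s_\infty - s_0)$ extracted from Lemma \ref{compact-lem1}; the key point $r_*^N s_0 \geq s_\infty$ (coming from $t_0 \leq t_\infty$) makes the low-order term non-negative while $(r_*^N-1)\int\Psi_N(\alpha|u_0|^{N'})\,dx > 0$ forces the contradiction outright. Both proofs turn on the same mechanism --- the $N$-dilation scales $\int\Phi_N$ by a single factor, matching the leading term, so $u_0\not\equiv 0$ yields a strict gain --- but your one-step version sidesteps the paper's second stage and still delivers $t_0 = t_\infty$, $s_0 = s_\infty$, hence strong convergence, with no extra work. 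Your concluding observation about why multiplicative scaling alone is inconclusive is accurate; the paper deploys multiplicative scaling only after the $L^N$ mass has been fixed, precisely because without that the low-order term can go the wrong way.
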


A similar assertion is also proved in \cite[Theorem 1.1 and Lemma 2.2]{DST-16}

\begin{proof}
Since $u_0\ne 0$ in $W^{1,N}(\Bbb R^N)$, let $\tau_n:=\frac{\|u_n\|_N}{\|u_0\|_N}$ and 
$\tau_0 := \lim_{n \to \infty} \tau_n$. Remark that $\tau_0 \geq 1$ holds 
due to $u_n \rightharpoonup u_0$ weakly in $W^{1,N}(\RN)$.

	We first show $\tau_0 = 1$. For this purpose, we set 
$v_0(x) := u_0(\tau^{-1}_0 x)$. Since 
	\begin{equation}\label{21}
		\begin{aligned}
			\| \nabla v_0 \|_{N}^a + \| v_0 \|_N^b 
			= \| \nabla u_0 \|_{N}^a + \tau_0^b \| u_0 \|_N^b 
			&= \| \nabla u_0 \|_N^a + \lim_{n \to \infty} \tau_n^b \| u_0 \|_{N}^b 
			\\
			&\leq \liminf_{n\to\infty} \left( \| \nabla u_n \|_N^a + \| u_n \|_N^b \right) = 1,
		\end{aligned}
	\end{equation}
we see 
	\[
		D_{N,\alpha}(a,b) \geq \int_{\RN} \Phi_N(\alpha |v_0|^{N'}) dx
		= \frac{\tau_0^N \alpha^{N-1}}{(N-1)!} \| u_0 \|_{N}^N 
		+ \tau_0^N \int_{\RN} \Psi_N(\alpha |u_0|^{N'}) dx.
	\]
If $\tau_0>1$, then by $u_0 \neq 0$ and Lemma \ref{compact-lem1}, one has 
	\[
		\begin{aligned}
			D_{N,\alpha}(a,b) &> \frac{\tau_0^N \alpha^{N-1}}{(N-1)!} \| u_0 \|_{N}^N 
			+  \int_{\RN} \Psi_N(\alpha |u_0|^{N'}) dx
			\\
			&= \lim_{n \to \infty}
			 \frac{ \alpha^{N-1}}{ (N-1)! }\tau_n^N  \| u_0 \|_{N}^N 
			+  \lim_{n \to \infty} \int_{\RN} \Psi_N(\alpha |u_n|^{N'}) dx
			\\
			&= \lim_{n\to\infty} \int_{\RN} \Phi_N( \alpha |u_n|^{N'} ) dx = D_{N,\alpha}(a,b),
		\end{aligned}
	\]
which is a contradiction. Thus, $\tau_0 = 1$ and $\| u_n \|_N \to \|u_0\|_{N}$.

	Next, if $\| \nabla u_0 \|_N < \liminf_{n \to \infty} \| \nabla u_n \|_{N} \leq 1$, 
then from $ \tau_n \to 1$ and \eqref{21}, 
we may find a $t_0>1$ so that 
	\[
		\| \nabla t_0 u_0 \|_N^a + \| t_0u_0 \|_N^b = 1.
	\]
Since $\Phi_N(s)$ is strictly increasing in $s$, $u_0 \not\equiv 0$ and 
$\| u_n \|_N \to \|u_0\|_N$, it follows from Lemma \ref{compact-lem1} that 
	\[
		\begin{aligned}
			D_{N,\alpha}(a,b) \geq \int_{\RN} \Phi_N(\alpha |t_0u_0|^{N'}) dx 
			&> \int_{\RN} \Phi_N(\alpha |u_0|^{N'}) dx 
			\\
			&= \lim_{n \to \infty} \int_{\RN} 
			\Phi_N(\alpha |u_n|^{N'}) dx = D_{N,\alpha}(a,b),
		\end{aligned}
	\]
which is a contradiction again. Therefore, $\| \nabla u_n \|_{N} \to \| \nabla u_0 \|_N$.

	Since $W^{1,N}(\RN)$ is uniformly convex and $\| u_n \|_{W^{1,N}} \to \|u_0 \|_{W^{1,N}}$, 
we see that $u_n \to u_0$ strongly in $W^{1,N}(\RN)$ and complete the proof. 
\end{proof}

\begin{lem}\label{vn-level}
\emph{(i)} Let $(a,b)\in(0,\infty)^2$ and $\alpha\in(0,\alpha_N)$. 
If there exists a radial maximizing sequence $\{u_n\}_{n=1}^\infty \subset W^{1,N}(\RN)$ 
such that $u_n \rightharpoonup 0$ weakly in $W^{1,N}(\RN)$, 
then $D_{N,\alpha}(a,b) = \frac{\alpha^{N-1}}{(N-1)!}$ holds. 

\medskip

\noindent
\emph{(ii)} Let $\alpha=\alpha_N$, $0<a$ and $b \leq N$. 
Assume $\{u_n\}_{n=1}^\infty \subset W^{1,N}(\RN)$ is 
a radial maximizing sequence for $D_{N,\alpha_N}(a,b)$ 
with $\limsup_{n \to \infty} \| \nabla u_n \|_N < 1$ and 
$u_n \rightharpoonup 0$ weakly in $W^{1,N}(\RN)$. 
Then $D_{N,\alpha_N}(a,b) = \frac{\alpha^{N-1}_N}{(N-1)!}$. 
\end{lem}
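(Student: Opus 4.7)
The plan is to split $\Phi_N$ into its lowest-order monomial plus the higher-order tail $\Psi_N$, use Lemma \ref{compact-lem1} to absorb the tail along the vanishing sequence, and exploit the constraint to bound the remaining leading term. The matching lower bound is obtained unconditionally from a dilation of a fixed compactly supported test function, so that in both parts (i) and (ii) one ends up with equality.

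For the upper bound I would start from the identity $\Phi_N(s)=\frac{s^{N-1}}{(N-1)!}+\Psi_N(s)$ together with $N'(N-1)=N$ to write
\[
\int_{\RN}\Phi_N(\alpha|u_n|^{N'})\,dx=\frac{\alpha^{N-1}}{(N-1)!}\,\|u_n\|_N^N+\int_{\RN}\Psi_N(\alpha|u_n|^{N'})\,dx.
\]
The hypotheses of Lemma \ref{compact-lem1} are met in both (i) and (ii) (in (ii) precisely because $\limsup_{n\to\infty}\|\nabla u_n\|_N<1$ is in force), so with weak limit $u_0\equiv 0$ the tail integral converges to $\int_{\RN}\Psi_N(0)\,dx=0$. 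The constraint $\|\nabla u_n\|_N^a+\|u_n\|_N^b=1$ gives $\|u_n\|_N^N\le 1$, so passing to the limit in the maximizing sequence yields
\[
D_{N,\alpha}(a,b)=\lim_{n\to\infty}\int_{\RN}\Phi_N(\alpha|u_n|^{N'})\,dx\le\frac{\alpha^{N-1}}{(N-1)!}.
\]

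For the matching lower bound, which uses no hypothesis on $\{u_n\}$, I would test against vanishing profiles. Fix a nonzero radial $\varphi\in C_c^\infty(\RN)$ and, for $R>0$, set $v_R(x):=c_R\varphi(x/R)$, choosing $c_R>0$ so that $\|\nabla v_R\|_N^a+\|v_R\|_N^b=1$. The scaling relations $\|\nabla v_R\|_N=c_R\|\nabla\varphi\|_N$ and $\|v_R\|_N=c_R R\|\varphi\|_N$ force $c_R\sim(R\|\varphi\|_N)^{-1}$ as $R\to\infty$, so $\|v_R\|_N\to 1$ while $\|\nabla v_R\|_N\to 0$, and in particular $\alpha|v_R|^{N'}\to 0$ uniformly on $\RN$. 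A first-order Taylor expansion of $\Phi_N$ at the origin, together with the error estimate $R^N\|v_R\|_{N N'}^{N N'}= (c_R R)^N\|\varphi\|_{NN'}^{NN'}c_R^{N/(N-1)}\to 0$, gives
\[
\int_{\RN}\Phi_N(\alpha|v_R|^{N'})\,dx=\frac{\alpha^{N-1}}{(N-1)!}\,\|v_R\|_N^N+o(1)\longrightarrow\frac{\alpha^{N-1}}{(N-1)!},
\]
so $D_{N,\alpha}(a,b)\ge\frac{\alpha^{N-1}}{(N-1)!}$. Combining the two inequalities completes the proof in both cases. The only non-mechanical point is invoking Lemma \ref{compact-lem1} in case (ii); the hypothesis $\limsup\|\nabla u_n\|_N<1$ was included precisely to enable this step, so no further obstacle arises.
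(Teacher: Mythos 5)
Your proof is correct and follows essentially the same route as the paper: split $\Phi_N$ as $\frac{s^{N-1}}{(N-1)!}+\Psi_N(s)$, use Lemma~\ref{compact-lem1} and the constraint $\|u_n\|_N\le 1$ for the upper bound, and test with a vanishing dilation family for the lower bound (the paper uses $\varphi_t(x)=t\varphi(tx)$ and simply drops $\Psi_N\ge 0$, whereas you estimate the tail explicitly, but the two are interchangeable). One small slip: your ``error estimate'' should read $\|v_R\|_{NN'}^{NN'}=(c_RR)^N\|\varphi\|_{NN'}^{NN'}c_R^{N/(N-1)}\to 0$, without the stray factor $R^N$ in front (as written, $R^N\|v_R\|_{NN'}^{NN'}$ does not tend to zero for $N>2$); the right-hand side you wrote is the correct quantity.
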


\begin{proof}
We prove (i) and (ii) at the same time. 
By Lemma \ref{compact-lem1}, the assumption $u_n \rightharpoonup 0$ weakly in $W^{1,N}(\Bbb R^N)$ 
and the normalization $\|u_n\|_N \leq(\|\nabla u_n\|_N^a+\|u_n\|_N^b)^{\frac{1}{b}}\leq 1$, we see 
	\[
		\begin{aligned}
			\int_{\Bbb R^N}\Phi_N\left(\alpha|u_n|^{N'}\right)
			&= \frac{\alpha^{N-1}}{(N-1)!} \| u_n \|_N^N 
			+ \int_{\RN} \Psi_N( \alpha |u_n|^{N'}) dx 
			\\
			&= \frac{\alpha^{N-1}}{(N-1)!} \| u_n \|_N^N + o(1) 
			\leq \frac{\alpha^{N-1}}{(N-1)!} + o(1). 
		\end{aligned}
	\]
Letting $n\to\infty$ in the above, we obtain 
$D_{N,\alpha}(a,b) \leq \alpha^{N-1} / (N-1)!$.

	In order to prove the opposite inequality $D_{N,\alpha}(a,b)\geq \alpha^{N-1} / (N-1)!$, 
let $ 0< \e < 1$ and $\varphi \in W^{1,N}(\RN)$ satisfy $ 1 - \e \leq \| \varphi \|_N^N < 1$. 
For $t>0$, put $\varphi_t (x) := t \varphi( t x)$. Then it is easy to see that 
$\| \nabla \varphi_t \|_N^a + \| \varphi_t \|_{N}^b \leq 1$ 
for all sufficiently small $t>0$. Furthermore, by Lemma \ref{compact-lem1} and $\Psi_N(s) \geq 0$, 
for sufficiently small $t>0$, 
	\[
		\begin{aligned}
			D_{N,\alpha}(a,b) \geq 
			\int_{\RN} \Phi_N(\alpha |\varphi_t|^{N'}) dx 
			&= \frac{\alpha^{N-1}}{(N-1)!} \| \varphi \|_{N}^N 
			+ \int_{\RN} \Psi_N( \alpha |\varphi_t|^{N'} ) d x  
			\\
			&\geq 
			\frac{\alpha^{N-1}}{(N-1)!} \| \varphi \|_N^N 
			\geq \frac{\alpha^{N-1}}{(N-1)!} ( 1 - \e).
		\end{aligned}
	\]
Since $\e \in (0,1)$ is arbitrary, we have $D_{N,\alpha}(a,b) \geq \frac{\alpha^{N-1}}{(N-1)!}$. 
Hence, 
we obtain the desired conclusion. 
\end{proof}

\begin{rem}
	From the above argument, it is easily seen that 
		\begin{equation}\label{22}
			D_{N,\alpha}(a,b) \geq \frac{\alpha^{N-1}}{(N-1)!}
		\end{equation}
	for all $\alpha \in (0,\alpha_N]$ and $(a,b) \in (0,\infty)^2$. 
\end{rem}

Now we are in the position to prove Proposition \ref{>-compact}. 

\begin{proof}[\bf{Proof of Proposition \ref{>-compact}}]
Let $\{u_n\}_{n=1}^\infty \subset W^{1,N}(\RN)$ be a radial maximizing sequence and 
$u_n \rightharpoonup u_0$ weakly in $W^{1,N}(\RN)$.

	(i) In view of Lemma \ref{une0-max}, 
it is enough to show $u_0 \ne 0$ in $W^{1,N}(\Bbb R^N)$. 
On the contrary, assume $u_0=0$ in $W^{1,N}(\Bbb R^N)$. 
Then Lemma \ref{vn-level} yields $D_{N,\alpha}(a,b)\leq\frac{\alpha^{N-1}}{(N-1)!}$, 
which is a contradiction to the assumption.

	(ii) In this case, we need to show $\limsup_{n\to\infty} \| \nabla u_n \|_N < 1$. 
If $\| \nabla u_n \|_N \to 1$, then by \cite[Lemma 4.2]{DST-16}, we see 
	\[
		\int_{\RN} \Phi_N(\alpha_N |u_n|^{N'} ) dx \to 0 = D_{N,\alpha_N}(a,b),
	\]
which is a contradiction and $\limsup_{n \to \infty} \| \nabla u_n \|_N < 1$ holds. 
From this fact, we can use a similar argument to case (i) and 
Proposition \ref{>-compact} holds. 
\end{proof}

For the proof of Theorem \ref{con-compactness}, 
we need the following lemma. 

\begin{lem}\label{con-alpha-inc2}
Let $(a,b)\in(0,\infty)^2$. 

\medskip

\noindent
\emph{(i)} The function $\frac{(N-1)!}{\alpha^{N-1}}D_{N,\alpha}(a,b)$ is non-decreasing for $0<\alpha \leq \alpha_N$. 

\medskip

\noindent
\emph{(ii)} 
Let $0<\beta_1<\beta_2 \leq \alpha_N$. 
Suppose that $b < N$ if $\beta_2 = \alpha_N$, and that 
$D_{N,\beta_1}(a,b)$ is attained.  
Then 
	\[
		\frac{(N-1)!}{\beta_2^{N-1}} D_{N,\beta_2} (a,b) 
		> \frac{(N-1)!}{\beta_1^{N-1}} D_{N,\beta_1} (a,b) 
	\]
and $D_{N,\beta_2}(a,b)$ is also attained. \textsl{}
\end{lem}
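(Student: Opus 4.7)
The starting point is the Taylor expansion $\Phi_N(s)=\sum_{j=N-1}^{\infty}s^j/j!$ together with $N'(N-1)=N$, which by Tonelli's theorem yields the identity
\begin{equation*}
\frac{(N-1)!}{\alpha^{N-1}}\int_{\R^N}\Phi_N(\alpha|u|^{N'})\,dx
= \|u\|_N^N + \sum_{j=N}^{\infty}\frac{(N-1)!\,\alpha^{j-(N-1)}}{j!}\,\|u\|_{N'j}^{N'j}
\end{equation*}
valid for every $u\in W^{1,N}(\R^N)$ for which the left-hand side is finite. The crucial observation is that for each $j\ge N$, the coefficient $\frac{(N-1)!\,\alpha^{j-(N-1)}}{j!}$ is strictly positive and strictly increasing in $\alpha$, while the $\|u\|_N^N$ term is $\alpha$-independent. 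Thus for any fixed admissible $u$ (i.e.\ $\|\nabla u\|_N^a+\|u\|_N^b=1$) the entire right-hand side is non-decreasing in $\alpha$.

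For (i), taking the supremum over admissible $u$ in the above identity immediately gives that $\alpha\mapsto \frac{(N-1)!}{\alpha^{N-1}}D_{N,\alpha}(a,b)$ is non-decreasing on $(0,\alpha_N]$.

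For (ii), let $u_1\in W^{1,N}(\R^N)$ attain $D_{N,\beta_1}(a,b)$; then $u_1$ is admissible. From \eqref{22} we have $D_{N,\beta_1}(a,b)\ge \beta_1^{N-1}/(N-1)!>0$, so $u_1\not\equiv 0$; combined with the continuous embedding $W^{1,N}(\R^N)\hookrightarrow L^q(\R^N)$ for every $q\in[N,\infty)$, this gives $0<\|u_1\|_{N'j}^{N'j}<\infty$ for all $j\ge N-1$. Applying the identity to $u_1$ at the two values $\beta_1<\beta_2$, the strict monotonicity of the coefficient for (say) $j=N$ then produces
\begin{equation*}
\frac{(N-1)!}{\beta_2^{N-1}}D_{N,\beta_2}(a,b)
\ge \frac{(N-1)!}{\beta_2^{N-1}}\int_{\R^N}\Phi_N(\beta_2|u_1|^{N'})\,dx
> \frac{(N-1)!}{\beta_1^{N-1}}\int_{\R^N}\Phi_N(\beta_1|u_1|^{N'})\,dx
= \frac{(N-1)!}{\beta_1^{N-1}}D_{N,\beta_1}(a,b),
\end{equation*}
which is the first claim of (ii). Combined with $\frac{(N-1)!}{\beta_1^{N-1}}D_{N,\beta_1}(a,b)\ge 1$ from \eqref{22}, this yields $D_{N,\beta_2}(a,b)>\beta_2^{N-1}/(N-1)!$. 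Proposition \ref{>-compact}(i) (in the case $\beta_2<\alpha_N$) or Proposition \ref{>-compact}(ii) (in the case $\beta_2=\alpha_N$ with $b<N$) then supplies a maximizer of $D_{N,\beta_2}(a,b)$.

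The only potential subtlety is the upgrade from non-strict monotonicity in (i) to strict monotonicity in (ii); this is precisely where the hypothesis that $D_{N,\beta_1}(a,b)$ is \emph{attained} enters, because we need a concrete admissible $u_1$ with $u_1\not\equiv 0$ to insert into the identity. With this in hand the rest is bookkeeping, and the appeal to Proposition \ref{>-compact} is immediate given \eqref{22}.
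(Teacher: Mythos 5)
Your proposal is correct and takes essentially the same approach as the paper: the paper proves the pointwise strict inequality $\frac{(N-1)!}{\beta_1^{N-1}}\Phi_N(\beta_1|s|^{N'}) < \frac{(N-1)!}{\beta_2^{N-1}}\Phi_N(\beta_2|s|^{N'})$ for $s\neq 0$ and integrates, whereas you expand first and compare coefficients of $\|u\|_{N'j}^{N'j}$, but these are the same computation; in both cases (ii) then follows by inserting the maximizer $u_1\not\equiv 0$ and invoking \eqref{22} and Proposition~\ref{>-compact}.
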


\begin{proof}
(i)  Since 
	\[
		\frac{(N-1)!}{\alpha^{N-1}} \Phi_{N}( \alpha |s|^{N'} ) 
		= (N-1)! \sum_{j=N-1}^\infty \frac{\alpha^{j-N+1} (|s|^{N'})^j }{j!},
	\]
it is immediate to see that 
	\begin{equation}\label{23}
		\frac{(N-1)!}{\beta_1^{N-1}} \Phi_{N} ( \beta_1 |s|^{N'} ) 
		< \frac{(N-1)!}{\beta^{N-1}_2} \Phi_{N}(  \beta_2 |s|^{N'}  ) 
		\quad {\rm for\ all} \ s \neq 0, \ 0 < \beta_1 < \beta_2 \leq \alpha_N.
	\end{equation}
Therefore, (i) holds.

(ii) Since $D_{N,\beta_1}(a,b)$ is attained by the assumption, 
there exists $u_0\in W^{1,N}(\Bbb R^N)$ such that 
$\|\nabla u_0\|_N^a+\|u_0\|_N^b=1$ and 
$D_{N,\beta_1}(a,b)=\int_{\Bbb R^N}\Phi_N(\beta_1|u_0|^{N'})$. 
Then it follows from \eqref{22} and \eqref{23} that 
	\[
		\begin{aligned}
			\frac{(N-1)!}{\beta_2^{N-1}}D_{N,\beta_2}(a,b) 
			& \geq \frac{(N-1)!}{\beta_2^{N-1}}
			\int_{\Bbb R^N}\Phi_N\left(\beta_2|u_0|^{N'}\right) dx
			\\
			&> \frac{(N-1)!}{\beta_1^{N-1}}\int_{\Bbb R^N}\Phi_N\left(\beta_1|u_0|^{N'}\right) dx
			= \frac{(N-1)!}{\beta_1^{N-1}}D_{N,\beta_1}(a,b) \geq 1. 
		\end{aligned}
	\]
Hence, there holds $D_{N,\beta_2}(a,b)>\frac{\beta_2^{N-1}}{(N-1)!}$ 
and Proposition \ref{>-compact} asserts that $D_{N,\beta_2}(a,b)$ is attained. 
\end{proof}

We are now ready to prove Theorem \ref{con-compactness}. 

\begin{proof}[\bf Proof of Theorem \ref{con-compactness}]
Let $a,b > 0$ and assume $\alpha_\ast (a,b,N) < \alpha_N$.

(i) By the definition of $\alpha_\ast$ and Lemma \ref{con-alpha-inc2}, 
it is clear that for any $\alpha \in (\alpha_\ast , \alpha_N)$, 
$D_{N,\alpha}(a,b)$ is attained. 
In addition, if $b < N$, then we also observe that 
$D_{N,\alpha}(a,b)$ is attained for every $\alpha \in (\alpha_\ast, \alpha_N]$ 
thanks to Lemma \ref{con-alpha-inc2}.

(ii) Again by Lemma \ref{con-alpha-inc2}, 
the function $\alpha \mapsto (N-1)! D_{N,\alpha}(a,b) / \alpha^N$ 
is strictly increasing in $(\alpha_\ast,\alpha_N)$.

We shall show $D_{N,\alpha_\ast}(a,b) = \alpha_\ast^{N-1} / (N-1)!$. 
From our convention for the case $\alpha_\ast=0$, we may assume $\alpha_\ast \in (0,\alpha_N)$. 
By contradiction, suppose $D_{N,\alpha_\ast} (a,b) > \alpha_\ast^{N-1} / (N-1)!$ due to \eqref{22}. 
Since $\alpha_\ast< \alpha_N$, we infer from Proposition \ref{>-compact} that 
$D_{N,\alpha_\ast}(a,b)$ is attained and let $u_0$ be a maximizer for 
$D_{N,\alpha_\ast} (a,b)$. Noting
	\[
		\lim_{\alpha \to \alpha_\ast} \int_{\RN} \Phi_N(\alpha |u_0|^{N'}) dx 
		= \int_{\RN} \Phi_N(\alpha_\ast |u_0|^{N'}) dx = D_{N,\alpha_\ast}(a,b)
		> \frac{\alpha_\ast^{N-1}}{(N-1)!}, 
	\]
if $\alpha \in (0,\alpha_\ast)$ is sufficiently close to $\alpha_\ast$, then 
we obtain
	\[
		D_{N,\alpha}(a,b) \geq \int_{\RN} \Phi_N(\alpha |u_0|^{N'}) dx 
		> \frac{\alpha_\ast^{N-1}}{(N-1)!} > \frac{\alpha^{N-1}}{(N-1)!}.
	\]
For such an $\alpha \in (0,\alpha_\ast)$, Proposition \ref{>-compact} asserts that 
$D_{N,\alpha}(a,b)$ is attained and this contradicts the definition of $\alpha_\ast$. 
Hence, we have $D_{N,\alpha_\ast} (a,b) = \alpha_\ast^{N-1} / (N-1)!$. 
It is also easy to see that \eqref{11} holds.

Finally, \eqref{22}, Proposition \ref{>-compact} and the definition of $\alpha_\ast$ yield 
$D_{N,\alpha}(a,b) = \alpha^{N-1} / (N-1)!$ for each $\alpha \in [0,\alpha_\ast]$.

(iii) If $a_1 \leq a_2$, $b_1 \leq b_2$ and 
$\| \nabla u \|_N^{a_1} + \| u \|_N^{b_1} \leq 1$, 
then $\| \nabla u \|_{N}^{a_2} + \| u \|_{N}^{b_2} \leq 1$, hence, 
$D_{N,\alpha}(a_1,b_1) \leq D_{N,\alpha}(a_2,b_2)$. 
Thus from \eqref{11}, we see $\alpha_\ast (a_1,b_1,N) \geq \alpha_\ast(a_2,b_2,N)$ 
and Theorem \ref{con-compactness} holds. 
\end{proof}

\section{Proof of Theorem \ref{alpha*estimates}}\label{sec3}
\noindent

In this section, we shall show Theorem \ref{alpha*estimates}. 
We first deal with the case (i) in Theorem \ref{alpha*estimates}, that is, the case $a>N'$. 

\begin{prop}\label{thm-proof-i-iii}
	Let $(a,b)\in(N',\infty)\times(0,\infty)$ and $\alpha \in (0,\alpha_N)$. 
	Then $D_{N,\alpha}(a,b)$ is attained, and hence, there holds $\alpha_\ast (a,b,N) = 0$. 
\end{prop}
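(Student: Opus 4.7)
The plan is to apply Proposition \ref{>-compact}(i): since that proposition guarantees $D_{N,\alpha}(a,b)$ is attained whenever $\alpha \in (0,\alpha_N)$ and $D_{N,\alpha}(a,b) > \frac{\alpha^{N-1}}{(N-1)!}$, and since the characterization \eqref{11} then forces $\alpha_\ast(a,b,N) = 0$ once attainment is known for every $\alpha \in (0,\alpha_N)$, the whole proposition reduces to verifying the strict inequality $D_{N,\alpha}(a,b) > \frac{\alpha^{N-1}}{(N-1)!}$ for each $\alpha \in (0,\alpha_N)$.

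To produce an explicit competitor that beats the vanishing threshold, I would exploit the scaling freedom available when $a > N'$. Fix $\varphi \in C_c^\infty(\RN)\setminus\{0\}$ with $\varphi \ge 0$ and $\|\varphi\|_N = 1$, and for small $\mu > 0$ set $u_\mu(x) := \kappa_\mu \mu\,\varphi(\mu x)$, where $\kappa_\mu>0$ is chosen so the constraint holds. A direct change of variables gives $\|u_\mu\|_N = \kappa_\mu$ and $\|\nabla u_\mu\|_N = \kappa_\mu\mu\|\nabla\varphi\|_N$, so the constraint $\|\nabla u_\mu\|_N^a + \|u_\mu\|_N^b = 1$ becomes
\[
\kappa_\mu^a\mu^a\|\nabla\varphi\|_N^a + \kappa_\mu^b = 1,
\]
which yields $\kappa_\mu \to 1$ and $\kappa_\mu^N = 1 - \frac{N}{b}\|\nabla\varphi\|_N^a\,\mu^a + o(\mu^a)$ as $\mu \to 0^+$. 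Using the identity $N'(N-1) = N$, the change of variables $y = \mu x$, and the series $\Phi_N(s) = \sum_{j\ge N-1} s^j/j!$ then give
\[
\int_{\RN}\Phi_N\bigl(\alpha|u_\mu|^{N'}\bigr)\,dx = \frac{\alpha^{N-1}}{(N-1)!} - \frac{N\alpha^{N-1}\|\nabla\varphi\|_N^a}{b(N-1)!}\mu^a + \frac{\alpha^N\|\varphi\|_{NN'}^{NN'}}{N!}\mu^{N'} + o\bigl(\mu^a+\mu^{N'}\bigr).
\]

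The decisive comparison is between the two correction exponents: the $\mu^a$-loss comes from sacrificing mass in $\|u_\mu\|_N^N$ in order to activate nonzero gradient, while the $\mu^{N'}$-gain arises from the next ($j=N$) term of $\Phi_N$ and owes its exponent precisely to $N'(N-1) = N$. The hypothesis $a > N'$ makes the former $o$ of the latter, so the positive $\mu^{N'}$-term dominates the (possibly negative) $\mu^a$-term, giving $\int \Phi_N(\alpha|u_\mu|^{N'})\,dx > \frac{\alpha^{N-1}}{(N-1)!}$ for all sufficiently small $\mu>0$. The only real subtlety is confirming that the higher-order remainders produced by expanding $\kappa_\mu^N$ and $\kappa_\mu^{NN'}$ (and the $j\ge N+1$ terms of $\Phi_N$, which contribute $O(\mu^{2N'})$) do not disturb this dominance; since $\kappa_\mu - 1 = O(\mu^a)$ this is a routine verification. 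With the strict inequality in hand for every $\alpha \in (0,\alpha_N)$, Proposition \ref{>-compact}(i) gives attainment, and \eqref{11} yields $\alpha_\ast(a,b,N) = 0$.
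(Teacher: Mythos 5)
Your proposal is correct and follows essentially the same route as the paper: reduce to the strict inequality $D_{N,\alpha}(a,b)>\frac{\alpha^{N-1}}{(N-1)!}$ via Proposition \ref{>-compact}(i), test with the scaled family $\mu\varphi(\mu\cdot)$ renormalized to the constraint, and use $a>N'$ so that the $\mu^{N'}$-gain from the $j=N$ term of $\Phi_N$ dominates the $\mu^a$-loss from the normalization. The only difference is cosmetic: you carry out a direct asymptotic expansion in $\mu$, whereas the paper shows the corresponding function $f(t)$ has positive derivative near $t=0$ using the formula for $\beta_\ast'(t)$ — the same exponent comparison in either form.
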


\begin{proof}
By virtue of Proposition \ref{>-compact}, 
it suffices to show $D_{N,\alpha}(a,b)>\frac{\alpha^{N-1}}{(N-1)!}$.

Let $v\in W^{1,N}(\Bbb R^N)\setminus\{0\}$ with 
$\| \nabla v \|_N^a + \| v \|_N^b = 1$ 
and set $v_t(x):=t^{1/N}v(t^{1/N} x)$ for $t>0$. 
Then we see for $\beta>0$, 
\begin{align*}
&\|\nabla(\beta v_t)\|_N^a+\|\beta v_t\|_N^b=\beta^a\|\nabla v_t\|_N^a+\beta^b\|v_t\|_N^b
=\beta^a t^{a/N}\|\nabla v\|_N^a+\beta^b\|v\|_N^b.
\end{align*}
Thus,  we find a unique $\beta_*(t)>0$ such that $w_t:=\beta_*(t) v_t$ satisfies 
\begin{align}\label{beta*t-formula}
\|\nabla w_t\|_N^a+\|w_t\|_N^b=\beta_*(t)^a t^{a/N}\|\nabla v\|_N^a+\beta_*(t)^b\|v\|_N^b=1. 
\end{align}

	By the definition of $D_{N,\alpha}(a,b)$, we have 
	\begin{align*}
	D_{N,\alpha}(a,b) \geq\int_{\Bbb R^N}\Phi_N(\alpha|w_t|^{N'})
	&=\sum_{j=N-1}^\infty\frac{\alpha^j}{j!}\|w_t\|_{N'j}^{N'j}
	\\
	&=\sum_{j=N-1}^\infty\frac{\alpha^j}{j!}\beta_*(t)^{N'j}\|v_t\|_{N'j}^{N'j}\\
	&\geq\sum_{j=N-1}^N\frac{\alpha^j}{j!}\beta_*(t)^{N'j}t^{\frac{j}{N-1}-1}\|v\|_{N'j}^{N'j}
	\\
	&=\frac{\alpha^{N-1}}{(N-1)!}\left(
	\beta_*(t)^N\|v\|_N^N+\frac{\alpha}{N}\beta_*(t)^{NN' }t^{1/(N-1) }\|v\|_{NN' }^{ NN'}
	\right)\\
	&=:\frac{\alpha^{N-1}}{(N-1)!}f(t). 
	\end{align*}

	On the other hand, recalling \eqref{beta*t-formula}
one sees that 
	\begin{equation}\label{38}
	\lim_{t \downarrow 0} \beta_\ast (t) = \|v\|_{N}^{-1}, \quad 
	\beta_\ast'(t) =
	-\frac{\frac{a}{N}t^{\frac{a}{N}-1}\beta_*(t)^a\|\nabla v\|_N^a}
	{
		a\beta_*(t)^{a-1}t^{\frac{a}{N}}\|\nabla v\|_N^a+b\beta_*(t)^{b-1}\|v\|_N^b
	}, \quad 
	\lim_{t \downarrow 0} f(t) = 1.
	\end{equation}
	Therefore, in order to prove the strict inequality 
	$D_{N,\alpha}(a,b)>\frac{\alpha^{N-1}}{(N-1)!}$, 
	it suffices to prove $f'(t)>0$ for small $t>0$. 
	We set
	\begin{align}
	\notag g(t) :=&
	\frac{\alpha}{N(N-1)}\|v\|_{ NN'}^{NN'}\beta_*(t)^{NN'}
	+ \alpha N'  \|v\|_{NN' }^{NN'} 
	t\,\beta_*(t)^{NN'-1}\beta_*'(t)\\
&\label{39}+N\|v\|_N^N\,t^{1-\frac{1}{N-1}}\beta_*(t)^{N-1}\beta_*'(t), 
	\end{align}
	and then a direct computation shows $f'(t) = t^{-1 + 1/(N-1)} g(t)$. 
	Thus, $f'(t) > 0$ is equivalent to showing $g(t) > 0$.

	From \eqref{38} and $a>N'$, it is easy to see that 
	\[
	t^{1-\frac{1}{N-1}} \beta_\ast'(t) \to 0 \quad 
	{\rm as} \ t \to 0.
	\]
	Hence, by \eqref{39}, we see that $g(t)$ is continuous and $g(0) > 0$. 
	Thus, one has $f'(t) > 0$ for sufficiently small $t>0$ and 
	Proposition \ref{thm-proof-i-iii} holds. 
\end{proof}

	Next, we consider the case $a \leq N'$. 
First we show $\alpha_\ast > 0$ as follows. 

\begin{prop}\label{small-non-exist}
Let $(a,b)\in(0,N']\times(0,\infty)$. 
Then there exists $\alpha_0\in(0,\alpha_N)$ depending on $N$, 
$a$ and $b$ such that $D_{N,\alpha}(a,b)$ is not attained for all $\alpha\in(0,\alpha_0)$. 
\end{prop}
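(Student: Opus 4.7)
The plan is to establish the stronger statement that $D_{N,\alpha}(a,b)=\frac{\alpha^{N-1}}{(N-1)!}$ for all sufficiently small $\alpha>0$; the non-attainment will then follow immediately. Indeed, if $u_0$ were a maximizer attaining this value, expanding
\[
\int_{\RN}\Phi_N(\alpha|u_0|^{N'})\,dx
=\frac{\alpha^{N-1}}{(N-1)!}\|u_0\|_N^N+\sum_{j\geq N}\frac{\alpha^j}{j!}\|u_0\|_{N'j}^{N'j}
\]
together with $\|u_0\|_N^N\leq 1$ (from the constraint) and the non-negativity of the tail would force both $\|u_0\|_N=1$ and $\|u_0\|_{N'j}=0$ for every $j\geq N$, a contradiction.

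To obtain the upper bound $\int_{\RN}\Phi_N(\alpha|u|^{N'})dx\leq \frac{\alpha^{N-1}}{(N-1)!}$ uniformly in admissible $u$, I would estimate the tail $\sum_{j\geq N}$ term by term. Writing $r:=\|\nabla u\|_N\leq 1$ and $s:=\|u\|_N$, the $j=N$ term is controlled by Gagliardo--Nirenberg: $\|u\|_{NN'}^{NN'}\leq B_{GN}\,s^N r^{N'}$. For the terms $j\geq N+1$, I would set $\tilde u=u/r$ (so $\|\nabla\tilde u\|_N=1$), rewrite $\|u\|_{N'j}^{N'j}=r^{N'j}\|\tilde u\|_{N'j}^{N'j}$, fix any $\beta\in(0,\alpha_N)$, and, for $\alpha\leq\beta$, use $(\alpha/\beta)^j\leq(\alpha/\beta)^{N+1}$ together with the scaling-invariant Moser--Trudinger inequality \eqref{1} applied to $\tilde u$. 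Using $r\leq 1$, this yields
\[
\sum_{j\geq N+1}\frac{\alpha^j}{j!}\|u\|_{N'j}^{N'j}
\leq\left(\frac{\alpha}{\beta}\right)^{N+1} C_\beta\, s^N\, r^{N'(N+1)-N}
=\left(\frac{\alpha}{\beta}\right)^{N+1} C_\beta\, s^N\, r^{2N'}.
\]

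Combining these estimates, invoking $r\leq 1$, the constraint $r=(1-s^b)^{1/a}$, and dividing through by $\frac{\alpha^{N-1}}{(N-1)!}$, the required inequality reduces to
\[
K(\alpha)\,s^N(1-s^b)^{N'/a}\leq 1-s^N\qquad\text{for all } s\in[0,1],
\]
where $K(\alpha)=\frac{\alpha B_{GN}}{N}+C'\alpha^2$ for some constant $C'=C'(N,\beta)$. Since $K(\alpha)\to 0$ as $\alpha\to 0^+$, the problem reduces to showing that $M:=\sup_{s\in[0,1]}h(s)$ is finite, where $h(s):=\frac{s^N(1-s^b)^{N'/a}}{1-s^N}$; once $M<\infty$, choosing $\alpha_0\in(0,\alpha_N)$ with $K(\alpha)\leq 1/M$ for $\alpha\in(0,\alpha_0)$ completes the argument.

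The main obstacle, and the precise point where the hypothesis $a\leq N'$ enters, is the finiteness of $M$. The function $h$ is continuous on $(0,1)$ with $h(0)=0$, while its behavior at $s=1$ is governed by
\[
h(1-\varepsilon)\sim \frac{b^{N'/a}}{N}\,\varepsilon^{N'/a-1}\quad\text{as }\varepsilon\downarrow 0,
\]
which is finite if and only if $N'/a-1\geq 0$, i.e.\ $a\leq N'$. Under this hypothesis $h$ extends continuously to $[0,1]$, so $M<\infty$. (If instead $a>N'$ one would have $M=\infty$, consistent with Proposition \ref{thm-proof-i-iii}.)
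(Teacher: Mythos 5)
Your proposal takes a genuinely different and in some ways stronger route than the paper. The paper (Proposition 3.2) assumes a maximizer $v$ exists, forms the scaled-and-renormalized family $w_t$ from the proof of Proposition 3.1, and shows that the necessary condition $\frac{d}{dt}\int\Phi_N(\alpha|w_t|^{N'})\big|_{t=1}=0$ cannot hold once $\alpha$ is small, by balancing $\beta_*'(1)$ against a Gagliardo--Nirenberg-controlled tail with the explicit constant $C^j j^j$. You instead prove directly that $D_{N,\alpha}(a,b)=\frac{\alpha^{N-1}}{(N-1)!}$ for small $\alpha$, which is a stronger conclusion (it identifies the value, not merely non-attainment). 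Your tail estimates are correct: GN for the $j=N$ term, the scaling-invariant Moser--Trudinger bound \eqref{1} applied to $u/\|\nabla u\|_N$ together with $r\le 1$ and the geometric ratio $(\alpha/\beta)^j\le(\alpha/\beta)^{N+1}$ for $j\ge N+1$, and the reduction to $\sup_{s\in[0,1]} h(s)<\infty$ with $h(s)=s^N(1-s^b)^{N'/a}/(1-s^N)$ is exactly where the hypothesis $a\le N'$ enters, with the correct asymptotics at $s=1$. Both approaches rely on GN; yours packages the argument as a clean upper bound on the whole supremum rather than a contradiction from a first-order condition.

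There is, however, one flawed step: the short paragraph deducing non-attainment from the identity $D_{N,\alpha}(a,b)=\frac{\alpha^{N-1}}{(N-1)!}$. From
\[
\frac{\alpha^{N-1}}{(N-1)!} = \frac{\alpha^{N-1}}{(N-1)!}\|u_0\|_N^N + \sum_{j\ge N}\frac{\alpha^j}{j!}\|u_0\|_{N'j}^{N'j},
\]
together with $\|u_0\|_N^N\le 1$ and the non-negativity of the tail, you cannot conclude $\|u_0\|_N=1$ and that the tail vanishes: the configuration $\|u_0\|_N^N<1$ with the tail equal to $\frac{\alpha^{N-1}}{(N-1)!}(1-\|u_0\|_N^N)>0$ is perfectly consistent with the displayed equality alone, so nothing is ``forced.'' The correct argument, which is already implicit in your estimates, is that your chain of inequalities for the upper bound is \emph{strict} on the admissible set: any admissible $u\neq 0$ has $s=\|u\|_N<1$ (else $\|\nabla u\|_N=0$, hence $u\equiv 0$), and if $\alpha$ is chosen so that $K(\alpha)<1/M$ strictly, then $K(\alpha)\,s^N(1-s^b)^{N'/a}<1-s^N$, whence $\int_{\RN}\Phi_N(\alpha|u|^{N'})\,dx<\frac{\alpha^{N-1}}{(N-1)!}=D_{N,\alpha}(a,b)$ and the supremum cannot be attained. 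With that replacement the proof is complete.
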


\begin{proof}
Let $(a,b)\in(0,N']\times(0,\infty)$ and assume that $D_{N,\alpha}(a,b)$ is attained by some function $v\in W^{1,N}(\Bbb R^N)$ 
with $\|\nabla v\|_N^a+\|v\|_N^b=1$ for some $\alpha\in(0,\alpha_N)$. 
We proceed as in the proof of Proposition \ref{thm-proof-i-iii}, and 
we find a unique $\beta_\ast(t) > 0$ such that 
$w_t := \beta_\ast (t) v_t$ satisfies \eqref{beta*t-formula}, where $v_t(x) = t^{1/N} v( t^{1/N} x )$. 
Note that $w_1=v$ since $v_1=v$ and $\beta_*(1)=1$. 
Recalling \eqref{38}, we obtain 
\begin{align}\label{beta'1-value}
\beta_*'(1)=-\frac{\frac{a}{N}\|\nabla v\|_N^a}{a\|\nabla v\|_N^a+b\|v\|_N^b}, 
\end{align}
where we used $\beta_*(1)=1$.

	Next, we set
	\[
		g(t) := \int_{\RN} \Phi_N \left( \alpha |w_t|^{N'} \right) d x
	\]
and compute $g'(t)$. Since $\| w_t \|_p^p = \beta_\ast^p(t) t^{-1+p/N} \| v \|_p^p$, 
we observe that
	\[
		g(t)
		= \sum_{j=N-1}^\infty\frac{\alpha^j}{j!}\|w_t\|_{N'j}^{N'j}
		= \sum_{j=N-1}^\infty\frac{\alpha^j}{j!}\beta_*(t)^{N'j}t^{\frac{j}{N-1}-1}\|v\|_{N'j}^{N'j}
	\]
and that 
	\[
		\begin{aligned}
			&\frac{d}{dt} \left(\frac{\alpha^j}{j!} 
			\beta_\ast (t)^{N'j} t^{ \frac{j}{N-1}-1 } \| v \|_{N'j}^{N'j}\right) 
			\\
			=& \left\{ N' j \beta_\ast'(t) \frac{\alpha^j}{j!} \beta_\ast(t)^{N'j-1}  t^{ \frac{j}{N-1}-1 } 
			+ \left( \frac{j}{N-1} -1 \right) \frac{\alpha^j}{j!} \beta_\ast(t)^{N'j} 
			 t^{ \frac{j}{N-1}-2 }  \right\} \| v \|_{N'j}^{N'j}.
		\end{aligned}
	\]
Since $\alpha < \alpha_N$ and $\| \nabla w_t \|_{N} \leq 1$, 
for $\e_0>0$ with $(1+\e_0) \alpha < \alpha_N$, it follows that 
	\[
		\sum_{j=N-1}^\infty \frac{\left\{ \alpha (1+\e_0) \right\}^j}{j!} 
		\beta_\ast(t)^{N'j} t^{ \frac{j}{N-1}-1 } \| v \|_{N'j}^{N'j}
		= \int_{\RN} \Phi_N \left( (1+\e_0) \alpha |w_t|^{N'}  \right) dx < \infty.
	\]
Noting that $\beta_\ast(t) \in C((0,\infty),\R)$ and 
	\[
		\begin{aligned}
			&\left|N' j \beta_\ast'(t) \frac{\alpha^j}{j!} \beta_\ast(t)^{N'j-1}  t^{ \frac{j}{N-1}-1 } 
			+ \left( \frac{j}{N-1} -1 \right) \frac{\alpha^j}{j!} \beta_\ast(t)^{N'j} 
			 t^{ \frac{j}{N-1}-2 } \right|
			 \\
			 =& \left|  N' \frac{j}{(1+\e_0)^j} \frac{\beta_\ast'(t)}{\beta_\ast(t)} 
			 + \left( \frac{j}{N-1} -1 \right) \frac{1}{(1+\e_0)^j t}   \right| 
			 \frac{(1+\e_0)^j \alpha^j  }{j!} \beta_\ast(t)^{N'j} t^{ \frac{j}{N-1}-1 }
			 \\
			 \leq & C_t \frac{(1+\e_0)^j \alpha^j  }{j!} \beta_\ast(t)^{N'j} t^{ \frac{j}{N-1}-1 },
		\end{aligned}
	\]
we have 
	\begin{equation}\label{33}
		\begin{aligned}
			g'(t) &= \sum_{j=N-1}^\infty \frac{d}{dt} \left(\frac{\alpha^j}{j!} 
			\beta_\ast (t)^{N'j} t^{ \frac{j}{N-1}-1 } \| v \|_{N'j}^{N'j}\right) 
			\\
			&=
			\sum_{j=N-1}^\infty 
			\left\{ N' j \beta_\ast'(t) \frac{\alpha^j}{j!} \beta_\ast(t)^{N'j-1}  t^{ \frac{j}{N-1}-1 } 
			+ \left( \frac{j}{N-1} -1 \right) \frac{\alpha^j}{j!} \beta_\ast(t)^{N'j} 
			 t^{ \frac{j}{N-1}-2 }  \right\} \| v \|_{N'j}^{N'j}.
		\end{aligned}
	\end{equation}

	Now recall that $v$ is a maximizer for $D_{N,\alpha}(a,b)$ and $w_1=v$, 
which implies $g(1) \geq g(t)$ and $g'(1) = 0$. Therefore, 
from \eqref{33} and the facts $\beta_*(1)=1$ and $\beta_*'(1)<0$ 
by \eqref{beta'1-value}, it follows that 
\begin{equation}\label{contradic-one}
\begin{aligned}
0&=g'(1)\\
&=\frac{N\alpha^{N-1}}{(N-1)!}\beta_*'(1)\|v\|_N^N
+\sum_{j=N}^\infty\frac{\alpha^j}{j!}\left(
N'j\beta_*'(1)+\frac{j}{N-1}-1
\right)\|v\|_{N'j}^{N'j}\\
&\leq\frac{N\alpha^{N-1}}{(N-1)!}\beta_*'(1)\|v\|_N^N+\frac{1}{N-1}\sum_{j=N}^\infty\frac{\alpha^j}{(j-1)!}\|v\|_{N'j}^{N'j}\\
&=\frac{\alpha^{N-1}}{N-1}\|v\|_N^N\|\nabla v\|_N^a\left(
\frac{N\beta_*'(1)}{(N-2)!\|\nabla v\|_N^a}+\frac{\alpha}{\|v\|_N^N\|\nabla v\|_N^a}\sum_{j=N}^\infty\frac{\alpha^{j-N}}{(j-1)!}\|v\|_{N'j}^{N'j} 
\right). 
\end{aligned}
\end{equation}
Furthermore, applying the Gagliardo-Nirenberg inequality (see \cite{KOS-92,O2})
\begin{align*}
\|v\|_{N'j}^{N'j}\leq C^j j^j \|v\|_N^N\|\nabla v\|_N^{N'j-N}, 
\end{align*}
where $C$ depends only on $N$ , we observe that 
\begin{equation}\label{contradic-two}
\begin{aligned}
\frac{\alpha}{\|v\|_N^N\|\nabla v\|_N^a}\sum_{j=N}^\infty\frac{\alpha^{j-N}}{(j-1)!}\|v\|_{N'j}^{N'j}
&\leq\frac{\alpha}{\|v\|_N^N\|\nabla v\|_N^a}\sum_{j=N}^\infty\frac{\alpha^{j-N}}{(j-1)!}C^j j^j\|v\|_N^N\|\nabla v\|_N^{N'j-N}\\
&=\alpha\sum_{j=N}^\infty\frac{\alpha^{j-N}}{(j-1)!}C^j j^j\|\nabla v\|_N^{N'j-N-a}
\\
&\leq\alpha\sum_{j=N}^\infty\frac{\alpha^{j-N}}{(j-1)!}C^j j^j, 
\end{aligned}
\end{equation}
where we used $N'j-N-a\geq N'N-N-a=N'-a\geq 0$ for $j\geq N$ and $\|\nabla v\|_N\leq 1$. 
Moreover, we have 
\begin{align}
\label{contradic-three}\alpha\sum_{j=N}^\infty\frac{\alpha^{j-N}}{(j-1)!}C^j j^j
=C^N\alpha\sum_{j=0}^\infty\frac{(j+N)^{j+N}}{(j+N-1)!}(\alpha C)^j
\leq C^N\alpha\sum_{j=0}^\infty\frac{(j+N)^{j+N}}{(j+N-1)!}\left(\frac{1}{2e}\right)^j=:\tilde C\alpha
\end{align}
for any $\alpha<\min\{\frac{1}{2eC},\alpha_N\}$ since the radius of the convergence of this power series is $\frac{1}{e}$. 
Hence, combining \eqref{contradic-one}, \eqref{contradic-two} and \eqref{contradic-three} together with \eqref{beta'1-value}, we see for any $\alpha<\min\{\frac{1}{2eC},\alpha_N\}$, 
\begin{align*}
0\leq-\frac{a}{(N-2)!(a\|\nabla v\|_N^a+b\|v\|_N^b)}+\tilde C\alpha, 
\end{align*}
and thus a contradiction occurs provided that 
\begin{align*}
\alpha<\frac{a}{\tilde C(N-2)!(a\|\nabla v\|_N^a+b\|v\|_N^b)}. 
\end{align*}
Here, we observe 
\begin{align*}
\frac{a}{a\|\nabla v\|_N^a+b\|v\|_N^b}=
\begin{cases}
&\frac{a}{b-(b-a)\|\nabla v\|_N^a}\geq\frac{a}{b}\quad\text{if \,}a\leq b,\\
&\frac{a}{a-(a-b)\|v\|_N^b}>1\quad\text{if \,}a>b.
\end{cases}
\end{align*}
Finally, defining $\alpha_0:=\min\left\{
\frac{\min\{\frac{a}{b},1\}}{\tilde C(N-2)!},\frac{1}{2eC},\alpha_N
\right\}$, 
we have a contradiction for all $\alpha<\alpha_0$. 
The proof of Proposition \ref{small-non-exist} is now complete.
\end{proof}

	Next, we give conditions in order for $\alpha_\ast < \alpha_N$ to happen 
when $a \leq N'$. 

	\begin{prop}\label{034}
	Let $\hat{\alpha} \in (0,\alpha_N)$ and $\hat{b} > N^2 / (\hat{\alpha} B_{GN})$. 
	Then there exists an $a_0 = a_0(\hat{\alpha}, \hat{b}, N) \in (0,N')$ such that 
	$D_{N,\alpha} (a,b)$ is attained 
	for all $(\alpha,a,b) \in A_1 \cup A_2$ 
	where $A_1 := \{ \alpha_N \} \times (a_0,N'] \times [\hat{b},N)$ and $A_2 := [\hat{\alpha},\alpha_N) \times (a_0,N'] \times [\hat{b}, \infty)$. 
	In particular, for $(a,b) \in (a_0,N'] \times [\hat{b}, \infty)$, 
	we have $\alpha_\ast \leq \hat{\alpha} < \alpha_N$. 
	\end{prop}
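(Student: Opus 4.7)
The plan is to reduce attainment to the strict inequality $D_{N,\alpha}(a,b) > \frac{\alpha^{N-1}}{(N-1)!}$ via Proposition \ref{>-compact}, and then to establish this strict inequality by a test function construction based on near-extremizers of the Gagliardo--Nirenberg inequality. The anchor case is $(\alpha,a,b)=(\hat\alpha,N',\hat b)$. I would pick $v\in W^{1,N}(\RN)\setminus\{0\}$ with the Gagliardo--Nirenberg quotient $K(v):=\|v\|_{NN'}^{NN'}/(\|v\|_N^N\|\nabla v\|_N^{NN'-N})$ arbitrarily close to $B_{GN}$, and for small $\eta>0$ define $w_\eta(x):=s\,v(\lambda x)$ with $s=\eta^{1/N'}/\|\nabla v\|_N$ and $\lambda$ chosen so that $\|\nabla w_\eta\|_N^{N'}=\eta$ and $\|w_\eta\|_N^{\hat b}=1-\eta$. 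Using the exponent identity $(NN'-N)/N'=1$, a direct computation yields $\|w_\eta\|_N^N=(1-\eta)^{N/\hat b}$ and $\|w_\eta\|_{NN'}^{NN'}=\eta(1-\eta)^{N/\hat b}K(v)$. Keeping the first two terms in the series for $\Phi_N$ gives
\[
\int_{\RN}\Phi_N(\hat\alpha|w_\eta|^{N'})\,dx \;\geq\; \frac{\hat\alpha^{N-1}}{(N-1)!}(1-\eta)^{N/\hat b}\Bigl(1+\frac{\hat\alpha K(v)}{N}\eta\Bigr),
\]
and a Taylor expansion in $\eta$ shows the RHS exceeds $\hat\alpha^{N-1}/(N-1)!$ for small $\eta$ precisely when $\hat\alpha K(v)/N>N/\hat b$. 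This follows from the hypothesis $\hat b>N^2/(\hat\alpha B_{GN})$ once $K(v)$ is chosen close enough to $B_{GN}$, so $D_{N,\hat\alpha}(N',\hat b)>\hat\alpha^{N-1}/(N-1)!$.

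Next I would create slack by replacing $w_\eta$ with $u^\ast:=\tau w_\eta$ for $\tau\in(1-\rho,1)$ close to $1$. By continuity in $\tau$ (via dominated convergence using any exponent strictly below $\alpha_N$), one secures $\|\nabla u^\ast\|_N^{N'}+\|u^\ast\|_N^{\hat b}<1$ strictly, together with $\int\Phi_N(\hat\alpha|u^\ast|^{N'})\,dx>\hat\alpha^{N-1}/(N-1)!$, and in particular $\|\nabla u^\ast\|_N,\|u^\ast\|_N<1$. To perturb in $(a,b)$: for $b\geq\hat b$ monotonicity gives $\|u^\ast\|_N^b\leq\|u^\ast\|_N^{\hat b}$, while continuity of $a\mapsto\|\nabla u^\ast\|_N^a$ at $a=N'$ supplies some $a_0\in(0,N')$ for which $\|\nabla u^\ast\|_N^a+\|u^\ast\|_N^b\leq 1$ throughout $(a,b)\in(a_0,N']\times[\hat b,\infty)$. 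Rescaling $u^\ast\mapsto tu^\ast$ with $t\geq 1$ to enforce the equality constraint only increases the Moser--Trudinger integral, so the strict inequality $D_{N,\hat\alpha}(a,b)>\hat\alpha^{N-1}/(N-1)!$ propagates to this whole rectangle.

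Finally I would extend in $\alpha$ via the pointwise comparison \eqref{23}: for any $\alpha\in[\hat\alpha,\alpha_N]$, the fixed test function $u^\ast$ satisfies $\int\Phi_N(\alpha|u^\ast|^{N'})\,dx>\alpha^{N-1}/(N-1)!$, so $D_{N,\alpha}(a,b)>\alpha^{N-1}/(N-1)!$ throughout $A_1\cup A_2$. Proposition \ref{>-compact}(i) then delivers attainment for $\alpha\in[\hat\alpha,\alpha_N)$, and Proposition \ref{>-compact}(ii)---which requires exactly $b<N$, matching the restriction in $A_1$---handles the $\alpha=\alpha_N$ slice. The bound $\alpha_\ast\leq\hat\alpha<\alpha_N$ for $(a,b)\in(a_0,N']\times[\hat b,\infty)$ is then immediate from the definition of $\alpha_\ast$.

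The central difficulty is the calibration in Step 1: one must identify the rescaling that makes the leading-order $\eta$-contributions from $\|w_\eta\|_N^N$ and from $\|w_\eta\|_{NN'}^{NN'}$ balance against one another so that $K(v)$, and hence $B_{GN}$, appears precisely in the critical comparison. The choice $s\sim\eta^{1/N'}$ is forced because the exponent identity $(NN'-N)/N'=1$ is what makes the leading coefficient of $\eta$ in $\|w_\eta\|_{NN'}^{NN'}$ equal to $K(v)$; any other scaling either suppresses this term or demands a strictly larger lower bound on $\hat b$. The threshold $\hat b>N^2/(\hat\alpha B_{GN})$ is the analytic signature of this balance, and should be optimal for the first-order test-function approach.
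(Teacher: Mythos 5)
Your proof is correct and follows essentially the same strategy as the paper: reduce to the strict inequality $D_{N,\alpha}(a,b)>\alpha^{N-1}/(N-1)!$ via Proposition \ref{>-compact}, establish this at the anchor point $(\hat\alpha,N',\hat b)$ by a test-function calculation that keeps only the first two terms of $\Phi_N$, scale an (approximate) Gagliardo--Nirenberg extremizer to sit on the constraint set, Taylor-expand near the vanishing extreme, and observe that the first-order coefficient is positive precisely when $\hat b>N^2/(\hat\alpha B_{GN})$; then propagate the strict inequality to $A_1\cup A_2$ by monotonicity/continuity in $a,b,\alpha$. The only cosmetic differences are that you parametrize by $\eta=1-t$, use a near-extremizer with $K(v)$ close to $B_{GN}$ rather than the actual maximizer $V$, and create slack by shrinking to $\tau w_\eta$ before moving $a$ away from $N'$, whereas the paper defines $g_{\alpha,a,b}(t)$, uses continuity of $a\mapsto g_{\hat\alpha,a,\hat b}$ in $C([0,1])$, and the monotonicity of $g$ in $\alpha,b$ directly.
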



	\begin{proof}
The argument is similar to that of \cite[Proposition 7.1]{DST-16}. 
Let $\hat{\alpha} \in (0,\alpha_N)$ and $\hat{b} > N^2 / (\hat{\alpha} B_{GN})$. 
From Proposition \ref{>-compact} 
it suffices to find $a_0(\hat{\alpha},\hat{b},N) \red{>}  0$ so that 
$D_{N,\alpha}(a,b) > \alpha^{N-1} / (N-1)!$ holds for each 
$(\alpha,a,b) \in A_1 \cup A_2$ where $A_i$ is defined in the above.

	To this end, we set 
	\[
		J_{N,\alpha}(u) := \sum_{j=N-1}^{N} \frac{\alpha^j}{j!} \| u \|_{N'j}^{N'j} 
		= \frac{\alpha^{N-1}}{(N-1)!} \| u \|_{N}^N + \frac{\alpha^N}{N!} 
		\| u \|_{NN'}^{NN'}.
	\]
It is easy to see that 
	\[
		D_{N,\alpha}(a,b) \geq \sup_{ \| \nabla u \|_N^a + \| u \|_N^b \leq 1 } 
		J_{N,\alpha}(u) =: d_{N,\alpha}(a,b).
	\]
Therefore, it suffices to show that 
	\begin{equation}\label{311}
		d_{N,\alpha}(a,b) > \frac{\alpha^{N-1}}{(N-1)!} 
		\quad \text{for any $(\alpha,a,b) \in A_1 \cup A_2$.
		}
	\end{equation}

	To prove \eqref{311}, we use a similar argument to the proof of 
Proposition \ref{thm-proof-i-iii}. Let $V$ be a maximizer for $B_{GN}$ 
with $\| \nabla V \|_{L^N} = 1 = \| V \|_N$ (see \cite[Lemma 8.4.2]{C-03}). 
For $t \in (0,1)$, define $w_t(x):=t^{1/b}V(x)$. 
Then $w_t$ is also a maximizer for $B_{GN}$ and satisfies 
$\| \nabla w_t \|_N^a = t^{a/b}$ and 
$\| w_t \|_N^b = t \in (0,1)$. 
Setting $w_t^\lambda(x) := \lambda w_t(\lambda x)$ for $\lambda > 0$ and 
noting that 
	\[
		\| \nabla w_t^\lambda \|_{N}^N =\lambda^N  \| \nabla w_t \|_N^N, 
		\quad \| w_t^\lambda \|_p^p =  \lambda^{p-N} \| w_t \|_p^p,
	\]
for every $t \in (0,1)$, we have 
	\[
		1 = \| \nabla w_t^{\lambda(t)} \|_N^a + \| w_t^{\lambda(t)} \|_N^b 
		\quad {\rm where} \ 
		\lambda (t) := t^{-1/b} (1-t)^{1/a} .
	\]

	Now put $W_t(x) := w_t^{\lambda(t)}(x)$ and compute $J_{N,\alpha}(W_t)$. 
Recalling that $w_t$ is a maximizer for $B_{GN}$, 
$\|w_t\|_{N}^b = t$ and $ t^{a/b} =  \| \nabla w_t \|_N^a$, 
we have 
	\[
		\begin{aligned}
			&\| W_t \|_{NN'}^{NN'} = \lambda(t)^{N'} \| w_t \|_{NN'}^{NN'} 
			= B_{GN} \lambda(t)^{N'} t^{N'/b} t^{N/b}  
			= B_{GN}t^{N/b} (1-t)^{N'/a}.
		\end{aligned}
	\]
Hence, 
	\[
		\begin{aligned}
			d_{N,\alpha} (a,b) & \geq J_{N,\alpha}(W_t) \\
			&= \frac{\alpha^{N-1}}{(N-1)!} 
			\left( t^{N/b} + \frac{\alpha}{N} \| W_t \|_{NN'}^{NN'} \right)
			= \frac{\alpha^{N-1}}{(N-1)!} 
			\left( t^{N/b} + \frac{\alpha}{N} 
			B_{GN}t^{N/b} (1-t)^{N'/a}
			 \right) \\
			 &=: \frac{\alpha^{N-1}}{(N-1)!} g_{\alpha,a,b}(t).
		\end{aligned}
	\]
Remark that $g_{\alpha,a,b}(1) = 1$ and 
$g_{\alpha,a,b}(t)$ is increasing in $\alpha$, $a$ and $b$ due to $t \in [0,1]$. 
We first show $g_{\alpha,a,b}'(1) < 0$ when 
$(\alpha,a,b) = (\hat{\alpha}, N' , \hat{b})$ 
with $\hat{b} > N^2 / (\hat{\alpha} B_{GN})$.

	When $(\alpha,a,b) = (\hat{\alpha}, N' , \hat{b})$, a simple computation gives 
	\[
		g_{\hat{\alpha}, N',\hat{b}}'(1) = \frac{N}{\hat{b}} - \frac{\hat{\alpha}}{N} B_{GN},
	\]
and then we have $g_{\hat{\alpha}, N',\hat{b}}' (1) < 0$ 
since $\hat{b} > N^2/(\hat{\alpha} B_{GN})$. 
Note that $g_{\hat{\alpha}, N',\hat{b}}'(1) < 0$ implies 
$1 < \max_{0 \leq t \leq 1} g_{\hat{\alpha}, N',\hat{b}} (t)$. 
Since the map $ a \mapsto g_{\hat{\alpha}, a,\hat{b}}(t) : (0,N'] \to C([0,1])$ is continuous, 
we can find an $a_0 >0$, depending only on $\hat{\alpha}, \hat{b} , N$, such that 
	\[
		1 < \max_{0 \leq t \leq 1} g_{\hat{\alpha}, a,\hat{b}} (t) \quad 
		{\rm for\ all} \ a \in (a_0,N'] .
	\]
Recalling that $g_{\alpha,a,b }(t)$ is increasing in $\alpha$ and $b$, 
we observe that 
	\[
		1 < \max_{0 \leq t \leq 1} g_{\alpha,a,b}(t) \quad 
		{\rm for\ every} \ (\alpha,a,b) \in A_1 \cup A_2.
	\]
Therefore, when $(\alpha,a,b) \in A_1 \cup A_2$, we obtain 
	\[
		\frac{\alpha^{N-1}}{(N-1)!} < \frac{\alpha^{N-1}}{(N-1)!} 
		\max_{0 \leq t \leq 1} g_{\alpha, a,b} (t) 
		\leq d_{N,\alpha}(a,b) \leq D_{N,\alpha}(a,b).
	\]
Noting that Proposition \ref{>-compact} is applicable for 
each $(\alpha,a,b) \in A_1 \cup A_2$, we see that 
that $D_{N,\alpha}(a,b)$ is attained and $\alpha_\ast \leq \hat{\alpha} < \alpha_N$. 
	\end{proof}

We are now in a position to prove Theorem \ref{alpha*estimates}. 

\begin{proof}[Proof of Theorem \ref{alpha*estimates}]
The statements (i), (ii) and (iii) follow from Propositions \ref{thm-proof-i-iii}, 
\ref{small-non-exist} and \ref{034}, respectively. 
	\end{proof}

\appendix

\section{Proof of $N^2/(\alpha_N B_{GN}) < N$}
\label{appendix}
\noindent

	The purpose of this appendix is to prove 
	\begin{equation}\label{a1}
		\frac{N^2}{\alpha_N B_{GN}} < N
	\end{equation}
for all $N \geq 2$. 
Notice that when $N =2$, 
\eqref{a1} is equivalent to $2/ B_{GN} < \alpha_2 = 4 \pi$ and 
this is already known (see \cite{B, I, W}). 
However, we also provide a simple proof of this fact below.

	We first rewrite \eqref{a1}. 
Using the Schwarz rearrangement, it follows that 
	\[
		B_{GN} = \sup_{u \in W^{1,N}_{\rm r} (\RN) \setminus \{0\} } 
		\frac{ \| u \|_{NN'}^{NN'} }{ \| u \|_{N}^N \| \nabla u \|_{N}^{NN'-N} }, 
	\]
where $W^{1,N}_{\rm r}(\RN)$ is a set consisting of radial functions in $W^{1,N}(\RN)$. 
For $ u \in W^{1,N}_{\rm r}(\RN)$, one has
	\[
		\| \nabla u \|_N^N = \omega_{N-1} \int_0^\infty r^{N-1} |u'(r) |^{N} d r, \quad 
		\| u \|_p^p = \omega_{N-1} \int_0^\infty r^{N-1} |u(r)|^p dr,
	\]
which implies 
	\[
		B_{GN} = \omega_{N-1}^{-1/(N-1)} 
		 \sup_{u \in W^{1,N}_{\rm r} (\RN) \setminus \{0\} } 
		\frac{\int_0^\infty r^{N-1}|u(r)|^{NN'} dr }
		{\int_0^\infty r^{N-1} |u(r)|^N dr 
		\left( \int_0^\infty r^{N-1} |u'(r)|^N dr \right)^{1/(N-1)}
		 }.
	\]
Recalling $\alpha_N = N \omega_{N-1}^{1/(N-1)}$, we see that \eqref{a1} is equivalent to 
	\begin{align}
			1 - \inf_{ u \in W^{1,N}_{\rm r} (\RN) \setminus \{0\}  }
			\frac{
			\int_0^\infty r^{N-1} |u(r)|^N dr 
				\left( \int_0^\infty r^{N-1} |u'(r)|^N dr \right)^{1/(N-1)}
			}
			{
			\int_0^\infty r^{N-1}|u(r)|^{NN'} dr
			}
			>0. 
			\label{a2}
	\end{align}
Therefore, instead of \eqref{a1}, we shall prove \eqref{a2} for every $N \geq 2$.

	When $N=2$, we can check \eqref{a2} by setting 
$u(r) := \max\{ 0 , (1-r)^3 \}$. In fact, since
	\[
		\begin{aligned}
			&\int_0^1 r (1-r)^6 dr = B(2,7) = \frac{\Gamma(2) \Gamma(7)}{\Gamma(9)}, \quad 
			9 \int_0^1 r (1-r)^4 dr = 9 B(2,5) = 9 \frac{\Gamma(2) \Gamma(5)}{\Gamma(7)},
			\\
			& \int_0^1 r (1-r)^{12} dr = B(2,13) 
			= \frac{\Gamma(2)\Gamma (13)}{\Gamma(15)}
		\end{aligned}
	\]
where $B(x,y)$ and $\Gamma (z)$ are the beta function and 
the gamma function, we see 
	\[
		\begin{aligned}
			& \frac{
			\int_0^\infty r^{N-1} |u(r)|^N dr 
				\left( \int_0^\infty r^{N-1} |u'(r)|^N dr \right)^{1/(N-1)}
			}
			{
			\int_0^\infty r^{N-1}|u(r)|^{NN'} dr
			}
			\\
			=& \frac{\Gamma(7)}{\Gamma(9)} 
			9 \frac{\Gamma(5)}{\Gamma(7)} \frac{\Gamma(15)}{\Gamma(13)} 
			= 9 \frac{1}{8\cdot 7 \cdot 6 \cdot 5}  14 \cdot 13 
			= \frac{39}{40} < 1.
		\end{aligned}
	\]

	For the case $N \geq 3$, we choose $u(r) = e^{-r}$ as a test function. 
Observe that 
	\[
\begin{aligned}
		&\int_0^\infty r^{N-1} e^{-Nr} dr 
		= \frac{\Gamma(N)}{N^N} 
		= \int_0^\infty r^{N-1} | (e^{-r})' |^N d r, \\
		&\int_0^\infty r^{N-1} 
		e^{-N^2 r/ (N-1)} dr 
		= \left( \frac{N-1}{N^2} \right)^N \Gamma(N).
\end{aligned}
	\]
Therefore, we obtain 
	\begin{equation}\label{a02}
		\begin{aligned}
			&\frac{
			\int_0^\infty r^{N-1} |u(r)|^N dr 
			\left( \int_0^\infty r^{N-1} |u'(r)|^N dr \right)^{1/(N-1)}
			}
			{
			\int_0^\infty r^{N-1}|u(r)|^{NN'} dr
			}\\
			= & \,
			\Gamma(N)^{1/(N-1)} (N-1)^{-N} 
			N^{(N^2-2N)/(N-1)}
			=: C_N.
		\end{aligned}
	\end{equation}
Thus to prove \eqref{a2}, it suffices to show $C_N^{N-1} < 1$ for $N\geq 3$, 
which is equivalent to $(N-1) \log(C_N) < 0$.

From 
	\[
		C_{N}^{N-1} 
		= (N-1) ! N^{N^2 - 2 N} (N-1)^{-N^2+N},  
	\]
it follows that 
	\[
		\begin{aligned}
			\log (C_N^{N-1}) 
			&= \sum_{k=1}^{N-1} \log k 
			+ (N^2 - 2N) \log N - (N^2-N) \log(N-1)
			\\
			&=\sum_{k=1}^{N-1} \log k 
			+ (N^2  - N) \left( \log N - \log(N-1) \right) - N \log N
			\\
			&=\left\{ \sum_{k=1}^{N-1} \log k - N 
			\left( \log N - 1\right)   \right\} 
			+ \left\{ - N 
			+ N(N-1) \log \left( 1 + \frac{1}{N-1}  \right) \right\}
			\\
			&=: d_N+e_N
		\end{aligned}
	\]

\noindent
{\bf Claim 1:} {\sl For each $N \geq 3$, there holds
	\[
		e_N < - \frac{1}{2}.
	\]
}

\smallskip

	In fact, by $\log(1+x) = \sum_{k=1}^\infty (-1)^{k-1} x^k / k$ for $ 0 \leq x <1$ and 
$N \geq 3$, we have 
	\[
		\begin{aligned}
			&\log \left( 1 + \frac{1}{N-1} \right) \\
			&= \frac{1}{N-1} - \frac{1}{2(N-1)^2} + \frac{1}{3(N-1)^3} 
			+ \sum_{k=2}^\infty \left( -\frac{1}{2k} \frac{1}{(N-1)^{2k}} 
			+ \frac{1}{2k+1} \frac{1}{(N-1)^{2k+1}} \right)
			\\
			&< \frac{1}{N-1} - \frac{1}{2(N-1)^2} + \frac{1}{3(N-1)^3}.
		\end{aligned}
	\]
Hence, one sees that  
	\[
		\begin{aligned}
			e_N 
			&< - N 
			+ N (N-1) \left\{ \frac{1}{N-1} - \frac{1}{2(N-1)^2} + \frac{1}{3(N-1)^3} \right\}
			\\
			&= - \frac{1}{2} \frac{N}{N-1} + \frac{N}{3(N-1)^2} 
			= -\frac{1}{2} + \frac{3-N}{6(N-1)^2} \leq -\frac{1}{2}.
		\end{aligned}
	\]

\noindent
{\bf Claim 2:} There holds {\sl $d_{N+1}< d_{N}$ for every $N \geq 3$}. 

\smallskip

Since 
	\[
		\begin{aligned}
			d_{N} - d_{N+1} 
			&= \sum_{k=1}^{N-1} \log k - N \left( \log N - 1 \right) 
			- \sum_{k=1}^{N} \log k 
			+ (N+1) 
			\left\{ \log(N+1) - 1  \right\}
			\\
			&= (N+1) \left\{ \log(N+1) - \log N  \right\} - 1,
		\end{aligned}
	\]
set $f(x) := (x+1) \{ \log(x+1) - \log(x) \} - 1$. Noting that 
	\[
		\begin{aligned}
			f'(x) &= \log(x+1) - \log(x)  + 1 - \frac{x+1}{x} 
			= \log(x+1) - \log(x) - \frac{1}{x} = \log(1+x^{-1}) - \frac{1}{x},\\
			f''(x) &= \frac{1}{x+1} - \frac{1}{x} + \frac{1}{x^2} 
			= \frac{1}{x^2(x+1)} > 0,
		\end{aligned}
	\]
and that $f'(x) \to 0$ as $x \to \infty$, we infer that 
$f'(x) < 0$ for all $x \geq 1$. Since 
	\[
		f(x) = (x+1) \log( 1 + x^{-1} ) - 1 
		= (x+1) \left(x^{-1} - \frac{x^{-2}}{2} + O(x^{-3}) \right)
		-1 = \frac{x^{-1}}{2} + O(x^{-2}),
	\]
we have $f(x) \to 0$ as $x \to \infty$, and 
$f(x) > 0$ for all $x \geq 1$. Thus $f(N) > 0$ and Claim 2 holds. 

\medskip

\noindent
{\bf Claim 3:} There holds {\sl $\log(C_N^{N-1}) < 0$ for every $N \geq 3$. }

\smallskip

By Claims 1-2, it is enough to prove $d_3 < 1/2$. 
Since $d_3 = \log 2 -3 ( \log 3 - 1 )$, we see that $d_3<\frac{1}{2}$ is equivalent to $e^5<\frac{729}{4}$. 
Moreover, we observe that 
\[
		e^5 < (2.8)^5 = 172.10368 < 182.25 = \frac{729}{4}. 
	\]
\noindent
Hence, Claim 3 is proved. 

\medskip

From \eqref{a2}, \eqref{a02} and Claim 3, we get the desired inequality 
$N^2 / (\alpha_N B_{GN}) < N$.

\subsection*{Acknowledgement}
This work was supported by JSPS KAKENHI Grant Number JP16K17623. 
The authors would like to express their hearty thanks to the referees for their
valuable comments.

\end{document}